\definecolor{webgreen}{rgb}{0,.5,0}
\def\C{{\mathds{C}}}
\def\N{{\mathds{N}}}
\def\Z{{\mathds{Z}}}
\def\1{{\bf 1}}
\def\lcm{\operatorname{lcm}}
\def\cale{{\cal E}}
\def\barr{\begin{array}}
\def\earr{\end{array}}
\def\dd{\displaystyle}
\numberwithin{equation}{section}
\newtheorem{thm}{Theorem}[section]
\newtheorem{lem}{Lemma}[section]
\newtheorem{exm}{Example}[section]
\newtheorem{cor}{Corollary}[section]
\newtheorem{prop}{Proposition}[section]
\newtheorem{rem}{Remark}[section]
\begin{document}

\title{\bf On the number of subgroups of a given exponent in a finite abelian group}
\author{Marius T\u arn\u auceanu and L\'aszl\'o T\'oth}
\date{}
\maketitle

\centerline{Publications de l'Institut Math\'ematique Beograd {\bf 101(115)} (2017), 121--133}

\begin{abstract} This paper deals with the number of subgroups of a given exponent in a finite abelian group. Explicit
formulas are obtained in the case of rank two and rank three abelian groups. An asymptotic formula is also presented.
\end{abstract}

\noindent{\bf MSC (2010):} Primary 20K01; Secondary 20K27, 11N37

\noindent{\bf Key words:} finite abelian group, subgroup, exponent

\section{Introduction}

One of the most important problems of combinatorial abelian group theory is to determine
the number of subgroups of a finite abelian group. This topic has enjoyed a constant evolution
starting with the first half of the $20^{\mathrm{th}}$ century. Since a finite
abelian group is a direct product of abelian $p$-groups, the above
counting problem can be reduced to $p$-groups. Formulas which give the
number of subgroups of type $\mu$ of a finite $p$-group of type
$\lambda$ were established by S.~Delsarte \cite{Del1948}, P.~E.~Djubjuk \cite{Dju1948} and
Y.~Yeh \cite{Yeh1948}. An excellent survey on this subject together with connections to symmetric
functions was written by M.~L.~Butler \cite{But1994} in 1994.

Another way to find the total number of subgroups of finite abelian
$p$-groups was described by G.~Bhowmik \cite{Bho1996} by using
divisor functions of matrices. By invoking different arguments,
formulas in the case of rank two $p$-groups were obtained by G.~C\u
alug\u areanu \cite{Cal2004}, M.~T\u arn\u auceanu
\cite{Tar2007,Tar2010}, M.~Hampejs, N.~Holighaus, L.~T\'oth,
C.~Wiesmeyr \cite{HHTW2014}, L.~T\'oth \cite{Tot2014} and for rank
three $p$-groups by M.~Hampejs, L.~T\'oth \cite{HamTot2013},
J.-M.~Oh \cite{Oh2013}. Note that the papers
\cite{HHTW2014,HamTot2013,Tot2014} include also direct formulas for
the groups $\Z_m\times \Z_n$ and $\Z_m\times \Z_n\times \Z_r$,
respectively, where $m,n,r\in \N^*:= \{1,2,\ldots\}$ are arbitrary.

The purpose of the current paper is to count the number of subgroups
of a given exponent in a finite abelian $p$-group. Explicit formulas
are obtained for rank two and rank three $p$-groups. The numbers of
subgroups of exponent $p$, respectively $p^2$ in an arbitrary
$p$-group are also considered. We prove that if two finite abelian groups have the same number of
subgroups of any exponent, then they are isomorphic. We also deduce compact formulas for
the number of subgroups of a given exponent of the group $\Z_m\times
\Z_n$, where $m,n\in \N^*$ are arbitrary. Furthermore, we obtain an
exact formula for the sum of exponents of the subgroups of
$\Z_m\times \Z_n$, and an asymptotic formula for the arithmetic
means of exponents of the subgroups of $\Z_n\times \Z_n$.

For the proofs we use two different approaches. The first one is
based on the known formula for the number of subgroups of a given
type in an abelian $p$-group, given in terms of gaussian
coefficients (Theorem \ref{Th_subgroups_type}). The second method,
applicable only for rank two groups, uses the representation of the
subgroups of $\Z_m\times \Z_n$ obtained by the second author in
\cite{Tot2014} (Theorem \ref{Th_repr}).

Most of our notation is standard and will usually not be repeated here.
For basic notions and results on group theory we refer the reader to \cite{Suz}.

\section{First approach}

Let $G$ be a finite abelian group of order $n$ and $G=G_1\times G_2\times\cdots\times G_m$ be
the primary decomposition of $G$, where $G_i$ is a $p_i$-group ($i=1,2,\ldots,m$). For every divisor
$d=p_1^{\alpha_1}p_2^{\alpha_2}\cdots p_m^{\alpha_m}$ of $n$ we denote
\begin{equation*}
\cale_d(G)=\{H \leq G : \exp(H)=d\}.
\end{equation*}

Since the subgroups $H$ of $G$ are of type $H= H_1\times H_2\times\cdots\times H_m$ with $H_i\leq G_i$ ($i=1,2,\ldots,m$), we infer that
\begin{equation} \label{eq_prod}
|{\cale}_d(G)|= \prod_{i=1}^m |\cale_{p_i^{\alpha_i}}(G_i)|.
\end{equation}

Equality \eqref{eq_prod} shows that the problem of counting the number of
subgroups of exponent $d$ in $G$ is reduced to $p$-groups. So, in this section we will assume that $G$ is a finite abelian
$p$-group, that is a group of type $\mathbb{Z}_{p^{\lambda_1}} \times \mathbb{Z}_{p^{\lambda_2}}\times \cdots \times
\mathbb{Z}_{p^{\lambda_k}}$ with $\lambda_1\geq\lambda_2\geq \ldots \geq\lambda_k\ge 1$.
In this case we will say that $G$ is of type $\lambda$, where $\lambda$ is the partition $(\lambda_1,\lambda_2,\ldots,\lambda_k,0,\ldots)$,
and we will denote it by $G_{\lambda}$.

We recall the following well--known result, which gives the number of subgroups of type $\mu$ of $G_{\lambda}$ (see \cite{Del1948,Dju1948,Yeh1948}).

\begin{thm} \label{Th_subgroups_type} For every partition $\mu \preceq\lambda$ {\rm (}i.e., $\mu_i\leq\lambda_i$ for every $i\in \N^*${\rm )} the
number of subgroups of type $\mu$ in $G_{\lambda}$ is
\begin{equation*}
\alpha_{\lambda}(\mu;p) =\prod_{i\geq 1} p^{(a_i-b_i)b_{i+1}}\binom{a_i-b_{i+1}}{b_i-b_{i+1}}_{\hspace{-1mm}p}\,,
\end{equation*}
where $\lambda' =(a_1,a_2,\ldots,a_{\lambda_1},0,\ldots)$, $\mu'=(b_1,b_2,\ldots,b_{\mu_1},0,\ldots)$ are the partitions conjugate
to $\lambda$ and $\mu$, respectively, and
\begin{equation*}
\binom{n}{k}_{\hspace{-1mm}p} = \displaystyle\frac{\prod_{i=1}^n(p^i-1)}{\prod_{i=1}^k(p^i-1) \prod_{i=1}^{n-k}(p^i-1)}
\end{equation*}
is the gaussian binomial coefficient {\rm (}it is understood that $\prod_{i=1}^m (p^i-1)=1$ for $m=0${\rm )}.
\end{thm}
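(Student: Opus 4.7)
The plan is to count subgroups of type $\mu$ in $G_\lambda$ by filtering $G_\lambda$ through the $p^i$-torsion subgroups $G_\lambda[p^i]:=\{x\in G_\lambda : p^ix=0\}$. Each successive quotient $G_\lambda[p^i]/G_\lambda[p^{i-1}]$ is elementary abelian of $\mathbb{F}_p$-dimension $a_i$, since $a_i$ is precisely the number of cyclic factors of $G_\lambda$ of order at least $p^i$. For a subgroup $H$ of type $\mu$, the intersections $H_i:=H\cap G_\lambda[p^i]$ give a filtration whose quotients $H_i/H_{i-1}$ sit as $\mathbb{F}_p$-subspaces $V_i\subseteq G_\lambda[p^i]/G_\lambda[p^{i-1}]$ of dimension $b_i$. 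Thus the enumeration reduces to one over $\mathbb{F}_p$-vector-space data.

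First I would verify that $H$ is recovered from the chain $(H_i)$ and that the subspaces $V_i$ are constrained by $pV_{i+1}\subseteq V_i$ under the map induced by multiplication by $p$. Processing the index $i$ from large to small, I would then separately count (a) the number of admissible subspaces $V_i$ once $V_{i+1}$ is given, and (b) for each admissible chain, the number of subgroups $H$ actually realising it. For (a), one has to choose $V_i$ of dimension $b_i$ containing the fixed $b_{i+1}$-dimensional subspace $pV_{i+1}$, and this gives the Gaussian coefficient $\binom{a_i-b_{i+1}}{b_i-b_{i+1}}_p$. For (b), each of the $b_{i+1}$ generators of $H_{i+1}/H_i$ may be shifted by an arbitrary element of an $(a_i-b_i)$-dimensional complement of $V_i$ inside $G_\lambda[p^i]/G_\lambda[p^{i-1}]$, yielding the factor $p^{(a_i-b_i)b_{i+1}}$. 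Multiplying over $i$ produces the stated product.

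The main obstacle will be the bookkeeping at step (b): one must verify that different lift choices yield distinct subgroups of $G_\lambda$ and that every subgroup of type $\mu$ arises exactly once in this way. This requires juggling the conjugate-partition indices and the interplay between the two natural filtrations of $G_\lambda$ (by $G_\lambda[p^i]$ and by $p^iG_\lambda$). A cleaner alternative, which avoids the direct dimension count, is to appeal to P.~Hall's theory of the Hall algebra: the stated number is the evaluation at $t=1/p$ of a Hall polynomial, a viewpoint developed in Macdonald's book and in the survey \cite{But1994}. In either form, the argument recovers the original results of Delsarte, Djubjuk and Yeh cited as \cite{Del1948,Dju1948,Yeh1948}.
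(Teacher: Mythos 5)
The first thing to say is that the paper does not prove this statement at all: Theorem \ref{Th_subgroups_type} is quoted as a classical result of Delsarte, Djubjuk and Yeh, with Butler's survey as a modern reference, and no argument is given. So there is no in-paper proof to compare against; your proposal has to stand on its own.

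As an outline, your filtration strategy is the standard route to this formula and the individual ingredients you name are correct: $G_\lambda[p^i]/G_\lambda[p^{i-1}]$ is elementary abelian of dimension $a_i$, the quotients $H_i/H_{i-1}$ give subspaces $V_i$ of dimension $b_i$ with $pV_{i+1}\subseteq V_i$ (the multiplication-by-$p$ maps between successive quotients are injective), and the count of $b_i$-dimensional subspaces containing a fixed $b_{i+1}$-dimensional one is indeed $\binom{a_i-b_{i+1}}{b_i-b_{i+1}}_p$. The formula also checks on small cases. But your step (b) is not a proof, and you say so yourself; unfortunately that is exactly where all the content of the theorem lives. The heuristic ``each of the $b_{i+1}$ generators may be shifted by an arbitrary element of an $(a_i-b_i)$-dimensional complement of $V_i$'' leaves three things unestablished: (i) a generator of $H_{i+1}$ can a priori be perturbed by any element of $G_\lambda[p^i]$ modulo $H_i$, a group of order $p^{\sum_{j\le i}(a_j-b_j)}$, not $p^{a_i-b_i}$, so you must show that perturbations coming from deeper filtration levels either destroy the condition $H_{i+1}\cap G_\lambda[p^i]=H_i$ or alter the lower data $(H_j)_{j<i}$, and that only a complement of $V_i$ in the top quotient survives; (ii) distinct admissible shifts must generate distinct subgroups, which is not automatic since a subgroup is unchanged by replacing a generator with another generator of the same coset structure; (iii) the number of realizations must be independent of the particular chain $(V_i)$ chosen, or the product formula does not factor. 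A clean way to package (i)--(iii) is to count, for fixed $H_i$ and fixed $V_{i+1}$, the set of valid $H_{i+1}$ as a torsor under $\operatorname{Hom}(V_{i+1},W_i)$ for a complement $W_i$ of $V_i$, but that is precisely the lemma you would have to state and prove. Your fallback to Hall polynomials is legitimate mathematics but is a citation rather than a proof, which puts you in the same position as the paper itself.
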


By using Theorem \ref{Th_subgroups_type} a way to compute the number of subgroups of exponent $p^i$ in $G_{\lambda}$ can be inferred, namely
\begin{equation*}
|\cale_{p^i}(G_{\lambda})| = \hspace{-3mm}\displaystyle\sum_{\mu\preceq\lambda,\, \mu_1=i}\hspace{-3mm}
\alpha_{\lambda}(\mu;p) \quad (i=0,1,\ldots,\lambda_1),
\end{equation*}
which is a polynomial in $p$ with integer coefficients. If $i\geq 1$ and $k\geq 2$ are arbitrary, then the polynomial
$|\cale_{p^i}(G_{\lambda})|$ can not be given explicitly, but we will do this in some
particular cases. Namely, we will consider the following cases: $k\in \{2,3\}$ and $i\geq 1$ arbitrary, $i\in \{1,2\}$ and $k\geq 2$ arbitrary.

We remark first that since $\sum_{H\in\cale_{p^i}(G_{\lambda})}H\leq G_{\lambda}$ and $\sum_{H\in\cale_{p^i}(G_{\lambda})}H$ is
of type $\mathbb{Z}_{p^i}\times \cdots\times \mathbb{Z}_{p^i}\times\mathbb{Z}_{p^{\lambda_r}}\times \cdots
\times\mathbb{Z}_{p^{\lambda_k}}$, where $r = \min\{j: \lambda_j <i\}$, we have
\begin{equation*}
|\cale_{p^i}(G_{\lambda})|= |\cale_{p^i}(\mathbb{Z}_{p^i}\times \cdots \times\mathbb{Z}_{p^i}
\times\mathbb{Z}_{p^{\lambda_r}}\times \cdots \times\mathbb{Z}_{p^{\lambda_k}})|
\end{equation*}
with the convention that the direct product $\mathbb{Z}_{p^{\lambda_r}}\times\cdots\times\mathbb{Z}_{p^{\lambda_k}}$ is trivial for
$i\leq\lambda_k$.

Our first result gives a precise expression for $|\cale_{p^i}(G_{\lambda})|$ in the case $k=2$.

\begin{prop} \label{Prop_exp_p} Let $G_{\lambda} =\mathbb{Z}_{p^{\lambda_1}}\times\mathbb{Z}_{p^{\lambda_2}}$ with $\lambda_1\geq\lambda_2\geq 1$. Then
\begin{align*}
|\cale_{p^i}(G_{\lambda})| = \begin{cases}
\dd \frac{p^{i+1}+p^i-2}{p-1}\,, &1\leq i\leq\lambda_2\,,\\ & \\
\dd \frac{p^{\lambda_2+1}-1}{p-1}\,, &\lambda_2<i\leq\lambda_1\,.
\end{cases}
\end{align*}
\end{prop}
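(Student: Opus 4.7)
The plan is to apply Theorem \ref{Th_subgroups_type} directly, enumerating the partitions $\mu\preceq\lambda$ with $\mu_1=i$ and summing $\alpha_\lambda(\mu;p)$. Since $\lambda$ has length $2$, the only such partitions are the single-part partition $\mu=(i)$ and the two-part partitions $\mu=(i,j)$ with $1\le j\le \min(i,\lambda_2)$. The conjugate partition of $\lambda$ is
\begin{equation*}
\lambda'=(\underbrace{2,2,\ldots,2}_{\lambda_2},\underbrace{1,1,\ldots,1}_{\lambda_1-\lambda_2}),
\end{equation*}
so the sequence $(a_t)$ takes the value $2$ for $t\le\lambda_2$ and the value $1$ for $\lambda_2<t\le\lambda_1$. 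This split between the "2-block" and the "1-block" of $\lambda'$ is exactly what forces the two cases in the statement.

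In the case $1\le i\le\lambda_2$ the entire support of $\mu'$ lies in the 2-block of $\lambda'$. A direct check of the product in Theorem \ref{Th_subgroups_type} gives $\alpha_\lambda((i);p)=p^{i-1}(p+1)$ (the only nontrivial gaussian coefficient is $\binom{2}{1}_p=p+1$, and there are $i-1$ factors contributing $p^{(a_t-b_t)b_{t+1}}=p$), while for $1\le j<i$ one gets $\alpha_\lambda((i,j);p)=p^{i-j-1}(p+1)$ and $\alpha_\lambda((i,i);p)=1$. Summing,
\begin{equation*}
|\cale_{p^i}(G_\lambda)|=p^{i-1}(p+1)+(p+1)\sum_{j=1}^{i-1}p^{i-j-1}+1,
\end{equation*}
and the geometric series collapses to $(p+1)(p^i-1)/(p-1)+1=(p^{i+1}+p^i-2)/(p-1)$.

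In the case $\lambda_2<i\le\lambda_1$ the support of $\mu'$ now crosses from the 2-block into the 1-block of $\lambda'$. One finds $\alpha_\lambda((i);p)=p^{\lambda_2}$ (the $p$-powers from $t\le\lambda_2$ accumulate, but the factors from the 1-block contribute $p^0$ and $\binom{1}{0}_p=1$). For the two-part partitions one gets $\alpha_\lambda((i,j);p)=p^{\lambda_2-j}$ for $1\le j\le\lambda_2$, independent of the precise value of $i$. Summing,
\begin{equation*}
|\cale_{p^i}(G_\lambda)|=p^{\lambda_2}+\sum_{j=1}^{\lambda_2}p^{\lambda_2-j}=p^{\lambda_2}+\frac{p^{\lambda_2}-1}{p-1}=\frac{p^{\lambda_2+1}-1}{p-1}.
\end{equation*}

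The routine part is the enumeration; the main obstacle is the careful bookkeeping of the exponents $(a_t-b_t)b_{t+1}$ and the gaussian binomials $\binom{a_t-b_{t+1}}{b_t-b_{t+1}}_p$ as $t$ moves across the three breakpoints $t=j$, $t=\lambda_2$, $t=i$, because the values of $a_t-b_t$ and $b_t-b_{t+1}$ jump at these points. Once this is done for all possible orderings of $j$, $\lambda_2$, and $i$, the final geometric sums are straightforward.
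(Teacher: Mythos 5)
Your proposal is correct: the values $\alpha_\lambda((i,j);p)=p^{i-j-1}(p+1)$ (for $j<i\le\lambda_2$), $\alpha_\lambda((i,i);p)=1$, and $\alpha_\lambda((i,j);p)=p^{\lambda_2-j}$ (for $\lambda_2<i$) all check out against Theorem \ref{Th_subgroups_type}, and the geometric sums are right. This is essentially the paper's own proof — the paper likewise sums the type-counts $N_{i,j}$ over $j$ in the same two cases, merely phrasing the computation inside $\mathbb{Z}_{p^i}\times\mathbb{Z}_{p^i}$ or $\mathbb{Z}_{p^i}\times\mathbb{Z}_{p^{\lambda_2}}$ rather than applying the gaussian-coefficient formula to $\lambda$ directly.
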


\begin{proof} We have $\lambda=(\lambda_1,\lambda_2,0,\ldots)$ and consequently
\begin{align*}
\lambda'=(2,2,\ldots,2,1,1,\ldots,1,0,\ldots),
\end{align*}
where the number of $2$'s is $\lambda_2$ and the number of $1$'s is $\lambda_1-\lambda_2$. Then
$$
|\cale_{p^i}(G_{\lambda})|=\dd\sum_{j=0}^i N_{i,j},
$$
where
$N_{i,j}$ denotes the number of subgroups of type $(i,j)$ in $G_{\lambda}$, or equivalently in $\mathbb{Z}_{p^i}\times\mathbb{Z}_{p^i}$
if $i\leq\lambda_2$ or in $\mathbb{Z}_{p^i}\times \mathbb{Z}_{p^{\lambda_2}}$ if $\lambda_2<i\leq\lambda_1$.

In the first case we obtain $N_{i,j}=(p+1)p^{i-j-1}$ for $j=0,1,\ldots,i-1$, and $N_{i,i}=1$. Therefore
$$
|\cale_{p^i}(\mathbb{Z}_{p^i}\times\mathbb{Z}_{p^i})|=1+\dd\sum_{j=0}^{i-1}(p+1)p^{i-j-1} = \dd\frac{p^{i+1}+p^i-2}{p-1}\,.
$$
In the second case we obtain
$N_{i,j}=p^{\lambda_2-j}$ for $j=0,1,\ldots,\lambda_2$. Therefore
$$
|\cale_{p^i}(\mathbb{Z}_{p^i}\times\mathbb{Z}_{p^{\lambda_2}})|= \dd\sum_{j=0}^{\lambda_2}p^{\lambda_2-j}=\dd\frac{p^{\lambda_2+1}-1}{p-1}\,.
$$
This completes the proof.
\end{proof}

\begin{exm} We have
\begin{align*}
|\cale_{p^i}(\mathbb{Z}_{p^4}\times\mathbb{Z}_{p^2})|=\begin{cases}
1,&i=0,\\
p+2,&i=1,\\
p^2+2p+2,&i=2,\\
p^2+p+1,&i=3 \text{\rm{ or }} i=4. \end{cases}
\end{align*}
\end{exm}

In particular, by summing all quantities $|\cale_{p^i}(G_{\lambda})|$ ($i=0,1,\ldots,\lambda_1$) we obtain the total number of subgroups of
$G_{\lambda}$ (see also \cite[Prop.\ 2.9]{Tar2007}, \cite[Th.\ 3.3]{Tar2010}).

\begin{cor} \label{Cor_total_nr_subgroups_2} The total number of subgroups of $G_{\lambda}=\mathbb{Z}_{p^{\lambda_1}}\times\mathbb{Z}_{p^{\lambda_2}}$,
where $\lambda_1\geq\lambda_2\geq 1$, is
\begin{equation} \label{s_p}
\frac{1}{(p{-}1)^2}\left[(\lambda_1{-}\lambda_2{+}1)p^{\lambda_2{+}2}{-}(\lambda_1{-}\lambda_2{-}1) p^{\lambda_2{+}1}{-}(\lambda_1{+}\lambda_2{+}3)p{+}(\lambda_1{+}\lambda_2+1)\right].
\end{equation}
\end{cor}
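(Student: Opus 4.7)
The proof is a direct computation: the total number of subgroups is the sum $\sum_{i=0}^{\lambda_1} |\mathcal{E}_{p^i}(G_\lambda)|$, and Proposition \ref{Prop_exp_p} already gives closed forms for the summands in the two ranges $1 \le i \le \lambda_2$ and $\lambda_2 < i \le \lambda_1$ (with the trivial subgroup $i=0$ contributing~$1$). So the plan is simply to assemble these three contributions and simplify.

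Concretely, I would write
\begin{equation*}
T := \sum_{i=0}^{\lambda_1} |\mathcal{E}_{p^i}(G_\lambda)|
= 1 + \sum_{i=1}^{\lambda_2} \frac{p^{i+1}+p^i-2}{p-1}
+ \sum_{i=\lambda_2+1}^{\lambda_1} \frac{p^{\lambda_2+1}-1}{p-1},
\end{equation*}
and handle the three pieces separately. The last sum is just $(\lambda_1-\lambda_2)(p^{\lambda_2+1}-1)/(p-1)$, since the summand no longer depends on $i$. For the middle sum I would split $p^{i+1}+p^i$ and apply the two finite geometric series identities $\sum_{i=1}^{\lambda_2} p^{i+1} = (p^{\lambda_2+2}-p^2)/(p-1)$ and $\sum_{i=1}^{\lambda_2} p^i = (p^{\lambda_2+1}-p)/(p-1)$, together with $\sum_{i=1}^{\lambda_2}(-2)=-2\lambda_2$, producing an expression whose denominator is $(p-1)^2$.

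After clearing denominators (multiplying $T$ through by $(p-1)^2$) I would collect like powers of $p$. The terms group naturally into four categories: the coefficients of $p^{\lambda_2+2}$, $p^{\lambda_2+1}$, $p$, and the constant. A routine bookkeeping gives the coefficients
$\lambda_1-\lambda_2+1$, $-(\lambda_1-\lambda_2-1)$, $-(\lambda_1+\lambda_2+3)$ and $\lambda_1+\lambda_2+1$, respectively, matching \eqref{s_p} exactly.

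The main (and only) obstacle is arithmetic care: one has to correctly combine the $1$ (from $i=0$), the two pieces of the $-2\lambda_2/(p-1)$ contribution (which needs to be rewritten over $(p-1)^2$), and the $(\lambda_1-\lambda_2)(p^{\lambda_2+1}-1)(p-1)$ term, without sign errors. No genuinely new idea is required beyond Proposition \ref{Prop_exp_p}.
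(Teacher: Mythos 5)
Your proposal is correct and matches the paper's approach exactly: the paper obtains the corollary precisely by summing the quantities $|\cale_{p^i}(G_{\lambda})|$ for $i=0,1,\ldots,\lambda_1$ from Proposition \ref{Prop_exp_p}, and your bookkeeping of the coefficients of $p^{\lambda_2+2}$, $p^{\lambda_2+1}$, $p$, and the constant term checks out (the $p^2$ terms cancel as they must).
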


\begin{exm} The total number of subgroups of $\mathbb{Z}_{p^4} \times \mathbb{Z}_{p^2}$ is
$3p^2+5p+7$.
\end{exm}

Now consider the case of rank three $p$-groups. We need the following lemma.

\begin{lem} \label{Lemma} Let $N_{i,j,\ell}$ denote the number of subgroups of type $(i,j,\ell)$ {\rm (}$i\geq j\geq \ell\geq 0${\rm )}
in $G_{\lambda} =\mathbb{Z}_{p^{\lambda_1}}\times \mathbb{Z}_{p^{\lambda_2}}\times \mathbb{Z}_{p^{\lambda_3}}$
with $\lambda_1\geq\lambda_2\geq \lambda_3\geq 1$. Then
\begin{align*}
N_{i,j,\ell}= \begin{cases}
p^{2i-2\ell-3}(p+1)(p^2+p+1), & \ell<j<i\leq \lambda_3,\\
p^{2(i-\ell-1)}(p^2+p+1), & \ell<j=i\leq \lambda_3 \text{ or } \ell=j<i\leq \lambda_3, \\
1, & \ell=j=i\leq \lambda_3, \\
p^{\lambda_3+i-2\ell-2}(p+1)^2, & \ell<j\leq \lambda_3 < i\leq \lambda_2, \\
p^{\lambda_3+i-2j-1}(p+1), & \ell=j\leq \lambda_3 < i\leq \lambda_2, \\
p^{2\lambda_3+i-j-2\ell-1}(p+1), & \ell \leq \lambda_3 < j<i\leq \lambda_2, \\
p^{2(\lambda_3-\ell)}, & \ell \leq \lambda_3 < j=i\leq \lambda_2, \\
p^{\lambda_2+2\lambda_3-j-2\ell}, & \ell \leq \lambda_3 < j\leq \lambda_2<i \leq \lambda_1, \\
p^{\lambda_2+\lambda_3-2\ell-1}(p+1), & \ell <j \leq \lambda_3 \leq \lambda_2 <i \leq \lambda_1, \\
p^{\lambda_2+\lambda_3-2\ell}, & \ell = j \leq \lambda_3 \leq \lambda_2 <i \leq \lambda_1.
\end{cases}
\end{align*}
\end{lem}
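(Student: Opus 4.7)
The plan is to apply Theorem \ref{Th_subgroups_type} with $\lambda = (\lambda_1,\lambda_2,\lambda_3,0,\ldots)$ and $\mu = (i,j,\ell,0,\ldots)$, and to evaluate the resulting product ten times, once for each configuration of $(i,j,\ell)$ described in the lemma. The first step is to write down the conjugate partitions explicitly: $\lambda'$ equals $3$ on positions $1,\ldots,\lambda_3$, equals $2$ on positions $\lambda_3+1,\ldots,\lambda_2$, equals $1$ on positions $\lambda_2+1,\ldots,\lambda_1$, and vanishes thereafter. Similarly, $\mu'$ equals $3$ on positions $1,\ldots,\ell$, equals $2$ on positions $\ell+1,\ldots,j$, equals $1$ on positions $j+1,\ldots,i$, and vanishes afterwards; some of these stretches may be empty when two of $\ell, j, i$ coincide.

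Next, I would observe that the product $\prod_k p^{(a_k-b_k)b_{k+1}}\binom{a_k-b_{k+1}}{b_k-b_{k+1}}_p$, with $a_k = \lambda'_k$ and $b_k = \mu'_k$, simplifies drastically because $b_k = b_{k+1}$ outside the at most three transition indices $k \in \{\ell, j, i\}$. At every index with $b_k = b_{k+1}$, the Gaussian binomial reduces to $\binom{a_k - b_k}{0}_p = 1$ and the factor becomes the pure power $p^{(a_k - b_k)b_k}$, which is further $1$ whenever $a_k = b_k$. Nontrivial $p$-powers from intermediate indices therefore arise only from runs of constant $b_k$ sitting strictly below a larger constant $a_k$, while the Gaussian factors live only at $k \in \{\ell, j, i\}$ and each of them evaluates to one of $1, p+1, p^2+p+1$, depending on the value of $a_k$ at the transition and on which of $\ell, j, i$ coincide.

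With this reduction, each of the ten cases becomes a short bookkeeping exercise. For each case I would locate $\ell, j, i$ in the three stretches $[1,\lambda_3]$, $(\lambda_3,\lambda_2]$, $(\lambda_2,\lambda_1]$ of $\lambda'$, read off the three Gaussian factors, and sum the $p$-exponents over the intermediate runs. For instance, in the first case $\ell < j < i \leq \lambda_3$ all three transitions lie in the top stretch, giving Gaussian factors $\binom{1}{1}_p = 1$, $\binom{2}{1}_p = p+1$, $\binom{3}{1}_p = p^2+p+1$; the index $k = j$ also contributes an extra $p^{(3-2)\cdot 1} = p$, while each of the $j-\ell-1$ indices in $(\ell,j)$ and the $i-j-1$ indices in $(j,i)$ contributes $p^2$. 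The total exponent of $p$ is $2(j-\ell-1)+1+2(i-j-1) = 2i-2\ell-3$, matching the first line of the statement. The nine remaining cases follow the same template with the appropriate values of $a_k$ substituted at the transitions.

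The main obstacle is not conceptual but combinatorial: for every case one must correctly decide in which stretch of $\lambda'$ each of $\ell, j, i$ lies (and whether any coincide), which then dictates the Gaussian factors, and then carefully count the lengths of the intermediate runs of constant $b_k$ to collect the right power of $p$. Organizing the ten cases in a single table mirroring the statement of the lemma would render the verification entirely routine.
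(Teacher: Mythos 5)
Your proposal is correct and follows essentially the same route as the paper: apply Theorem \ref{Th_subgroups_type} with the conjugate partitions $\lambda'$ and $\mu'$ written out as step functions and evaluate the product case by case, with the only (cosmetic) difference that the paper first truncates $\lambda_1,\lambda_2$ down to $i$ before conjugating, whereas you keep the full $\lambda$ and let the terms with $b_k=0$ or $a_k=b_k$ collapse to $1$. Your worked first case reproduces the paper's computation exactly, and the bookkeeping template you describe for the remaining nine cases is the same one the paper leaves as ``treated similarly.''
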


\begin{proof} We use Theorem \ref{Th_subgroups_type}. We distinguish the following three cases:

I. If $i\leq \lambda_3$, then $N_{i,j,\ell}$ is the number of subgroups of type $(i,j,\ell)$ in $\mathbb{Z}_{p^i}\times
\mathbb{Z}_{p^i} \times\mathbb{Z}_{p^i}$. Here we need to consider $\lambda=(i,i,i,0,\ldots)$ with $\lambda'=(3,3,\ldots,3,0\ldots)$, where
the number of $3$'s is $i$, and $\mu=(i,j,\ell,0,\ldots)$ with
$$\mu'=(3,3,\ldots,3,2,2,\ldots,2,1,1,\ldots,1,0,\ldots),$$ where the number of $3$'s is $\ell$, the number
of $2$'s is $j-\ell$, the number of $1$'s is $i-j$. In the subcase $\ell<j<i\leq \lambda_3$ we deduce
\begin{equation*}
N_{i,j,\ell}=(p^2)^{j-\ell-1}p(p+1)(p^2)^{i-j-1}(p^2+p+1)= p^{2i-2\ell-3}(p+1)(p^2+p+1).
\end{equation*}

The subcases $\ell<j=i\leq \lambda_3$, $\ell=j<i\leq \lambda_3$ and $\ell=j=i\leq \lambda_3$ are treated similar.

II. If $\lambda_3<i\leq \lambda_2$, then $N_{i,j,\ell}$ is the number of subgroups of type $(i,j,\ell)$ in $\mathbb{Z}_{p^i}\times
\mathbb{Z}_{p^i} \times\mathbb{Z}_{p^{\lambda_3}}$. We consider $\lambda=(i,i,\lambda_3,0,\ldots)$ with
$\lambda'=(3,3,\ldots,3,2,2,\ldots,2,0\ldots)$, where the number of $3$'s
is $\lambda_3$, the number of $2$'s is $i-\lambda_3$, and $\mu$, $\mu'$ like in the case I.

For example, in the subcase $\ell=j\leq \lambda_3<i\leq \lambda_2$
we obtain
\begin{equation*}
N_{i,j,\ell}=(p^2)^{\lambda_3-j}p^{i-\lambda_3-1}(p+1)= p^{\lambda_3+i-2j-1}(p+1).
\end{equation*}

III. If $\lambda_2<i\leq \lambda_1$, then $N_{i,j,\ell}$ is the number of subgroups of type $(i,j,\ell)$ in $\mathbb{Z}_{p^i}\times
\mathbb{Z}_{p^{\lambda_2}} \times\mathbb{Z}_{p^{\lambda_3}}$. We consider $\lambda=(i,\lambda_2,\lambda_3,0,\ldots)$
with $$\lambda'=(3,3,\ldots,3,2,2,\ldots,2,1,1,\ldots,1,0\ldots),$$
where the number of $3$'s is $\lambda_3$, the number of $2$'s is $\lambda_2-\lambda_3$, the number of $1$'s is $i-\lambda_2$,
and $\mu$, $\mu'$ like in the case I.
\end{proof}

\begin{prop} \label{Prop_exp_p_rank_3} Let $G_{\lambda} =\mathbb{Z}_{p^{\lambda_1}}\times\mathbb{Z}_{p^{\lambda_2}}\times \mathbb{Z}_{p^{\lambda_3}}$
with $\lambda_1\geq\lambda_2\geq \lambda_3\geq 1$. Then
\begin{align*}
|\cale_{p^i}(G_{\lambda})| = \begin{cases}
\dd \frac{p^{2i-1}\left((i+1)p^4+(i-1)p^3-p^2-(i+2)p-i\right)+3}{(p^2-1)(p-1)}\,, &1\leq i\leq\lambda_3,\\
&\\
\dd \frac{(\lambda_3+1)p^{\lambda_3+i}(p^2-1)(p+1)-2p^{2\lambda_3+2}+2}{(p^2-1)(p-1)}\,, &\lambda_3<i\leq\lambda_2,\\
&\\
\dd \frac{(\lambda_3+1)p^{\lambda_2+\lambda_3+1}(p^2-1)-p^{2\lambda_3+2}+1}{(p^2-1)(p-1)}\,, &\lambda_2<i\leq\lambda_1\,.
\end{cases}
\end{align*}
\end{prop}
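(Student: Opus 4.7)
The plan is to apply Lemma \ref{Lemma} and compute
$$|\cale_{p^i}(G_\lambda)| = \sum_{0\leq \ell \leq j \leq i} N_{i,j,\ell},$$
splitting according to the three cases $1\leq i\leq \lambda_3$, $\lambda_3 <i\leq \lambda_2$, and $\lambda_2<i\leq\lambda_1$. In each case the pairs $(\ell,j)$ with $0\leq \ell\leq j\leq i$ are partitioned into the subcases that appear in the statement of the lemma, and the resulting double sums are all geometric.

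For the first range $1\leq i\leq\lambda_3$, I would write
$$|\cale_{p^i}(G_\lambda)| = 1 + 2(p^2+p+1)\sum_{\ell=0}^{i-1} p^{2(i-\ell-1)} + (p+1)(p^2+p+1)\sum_{\ell=0}^{i-2}(i-1-\ell)\,p^{2i-2\ell-3},$$
where the term $1$ comes from $\ell=j=i$, the first sum combines the two symmetric subcases $\ell<j=i$ and $\ell=j<i$, and the double sum (over $0\leq\ell<j<i$) is rewritten by counting the $i-1-\ell$ admissible values of $j$ for each $\ell$. After evaluating the geometric and arithmetic--geometric sums and simplifying over the common denominator $(p^2-1)(p-1)$, the claimed closed form drops out.

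For the middle range $\lambda_3<i\leq\lambda_2$ only four subcases of the lemma are active, and the inner index ranges all simplify to $0\leq \ell\leq j\leq \lambda_3$ (possibly with strict inequalities) or $\lambda_3<j<i$; each is a plain geometric sum in $\ell$ and $j$. A similar treatment works for the last range $\lambda_2<i\leq\lambda_1$, where only the last three subcases of the lemma contribute, and the dependence on $i$ enters only through the common factor $p^{\lambda_2}$ (hence the formula is essentially linear in $p^{\lambda_2}$, as one sees in the statement).

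The main obstacle is purely bookkeeping: in the first range, the double sum over $0\leq \ell<j<i$ is an arithmetic--geometric series and one must carefully collect the coefficients of $p^{2i}$, $p^{2i-1}$, $p^i$, etc., to recognize the numerator $p^{2i-1}\!\left((i+1)p^4+(i-1)p^3-p^2-(i+2)p-i\right)+3$. I would double-check the answer by specializing to small values (e.g.\ $i=1$, where only $\ell=j=0$ or $\ell=0, j=i$ or $\ell=j=i$ can occur, giving $p^2+2p+2$ directly) and comparing with the closed form, and also by verifying at the boundary $i=\lambda_3$ that the first formula matches the second formula at $i=\lambda_3+1$ minus the one--step increment, which serves as a useful consistency check throughout the argument.
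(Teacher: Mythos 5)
Your proposal follows essentially the same route as the paper: apply Lemma \ref{Lemma}, partition the pairs $(\ell,j)$ with $0\le\ell\le j\le i$ into the lemma's subcases for each of the three ranges of $i$, and evaluate the resulting geometric and arithmetic--geometric sums; your decomposition for $1\le i\le\lambda_3$ (the term $1$, the two symmetric single sums, and the double sum counted via the $i-1-\ell$ values of $j$) is exactly the paper's. One small correction to your sanity check: for $i=1$ the three admissible triples give $N_{1,1,1}+N_{1,1,0}+N_{1,0,0}=1+2(p^2+p+1)=2p^2+2p+3$ (the number of nontrivial subspaces of $(\Z_p)^3$), not $p^2+2p+2$, and this is indeed what the closed form yields at $i=1$.
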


\begin{proof} We have
\begin{equation*}
|\cale_{p^i}(G_{\lambda})|=\sum_{0\leq \ell \leq j\leq i} N_{i,j,\ell}
\end{equation*}
and use Lemma \ref{Lemma}.

Case I. If $i\leq \lambda_3$, then
\begin{align*}
|\cale_{p^i}(G_{\lambda})| = & N_{i,i,i} + \sum_{\ell =0}^{i-1} \sum_{j=\ell+1}^{i-1} N_{i,j,\ell} + \sum_{\ell=0}^{i-1} N_{i,i,\ell} +
\sum_{\ell=0}^{i-1} N_{i,\ell,\ell} \\
= & 1+ \sum_{\ell =0}^{i-1} \sum_{j=\ell+1}^{i-1} p^{2i-2\ell-3}(p+1)(p^2+p+1) + 2 \sum_{\ell=0}^{i-1} p^{2(i-\ell-1)}(p^2+p+1) \\
= & 1+ \frac{p^2+p+1}{(p^2-1)(p-1)}\left((i-1)p^{2i+1}-ip^{2i-1}+p \right) + 2(p^2+p+1) \frac{p^{2i}-1}{p^2-1} \\
= & \frac{p^{2i-1}((i+1)p^4+(i-1)p^3-p^2-(i+2)p-i)+3}{(p^2-1)(p-1)}\,,
\end{align*}
by direct computations.

II. If $\lambda_3<i\leq \lambda_2$, then we have
\begin{align*}
|\cale_{p^i}(G_{\lambda})| = & \sum_{\ell =0}^{\lambda_3} \sum_{j=\ell+1}^{\lambda_3} N_{i,j,\ell} + \sum_{\ell=0}^{\lambda_3}
\sum_{j=\lambda_3+1}^{i-1} N_{i,j,\ell} + \sum_{\ell=0}^{\lambda_3} N_{i,i,\ell} + \sum_{\ell=0}^{\lambda_3} N_{i,\ell,\ell} \\
= & \sum_{\ell =0}^{\lambda_3} \sum_{j=\ell+1}^{\lambda_3} p^{\lambda_3+i-2\ell-2} (p+1)^2  + \sum_{\ell=0}^{\lambda_3}
\sum_{j=\lambda_3+1}^{i-1} p^{2\lambda_3+i-j-2\ell-1}(p+1) \\
+ & \sum_{\ell=0}^{\lambda_3} p^{2(\lambda_3-\ell)} + \sum_{\ell=0}^{\lambda_3}  p^{\lambda_3+i-2\ell -1}(p+1) \\
= & \frac{\lambda_3 p^{i-2}(p+1)^2+p^{\lambda_3}+p^i+p^{i-1}}{p^{\lambda_3}(p^2-1)} \left(p^{2\lambda_3+2}-1 \right) \\
- & \frac{p^{i-2}\left(p^{2\lambda_3+2}-(\lambda_3+1)p^2+ \lambda_3
\right)}{p^{\lambda_3}(p-1)^2} +
\frac{(p^{2\lambda_3+2}-1)(p^{i-\lambda_3-1}-1)}{(p-1)^2},
\end{align*}
which gives the above formula.

III. Finally, if $\lambda_2<i\leq \lambda_1$, then
\begin{gather*}
 |\cale_{p^i}(G_{\lambda})| =  \sum_{\ell =0}^{\lambda_3} \sum_{j=\lambda_3+1}^{\lambda_2} N_{i,j,\ell} + \sum_{\ell=0}^{\lambda_3-1}
\sum_{j=\ell+1}^{\lambda_3} N_{i,j,\ell} + \sum_{\ell=0}^{\lambda_3} N_{i,\ell,\ell} \\
=  \sum_{\ell =0}^{\lambda_3} \sum_{j=\lambda_3+1}^{\lambda_2} p^{\lambda_2+2\lambda_3-j-2\ell} + \sum_{\ell=0}^{\lambda_3-1}
\sum_{j=\ell+1}^{\lambda_3} p^{\lambda_2+\lambda_3-2\ell-1} (p+1) + \sum_{\ell=0}^{\lambda_3} p^{\lambda_2+\lambda_3-2\ell} \\
 =  \frac{(p^{2\lambda_3+2}-1)(p^{\lambda_2-\lambda_3}-1)}{(p^2-1)(p-1)} + \frac{p^{\lambda_2-\lambda_3+1}}{(p^2-1)(p-1)}
\left(\lambda_3p^{2\lambda_3+2}-(\lambda_3+1)p^{2\lambda_3}+1\right)
\\  +  p^{\lambda_2-\lambda_3}\frac{p^{2\lambda_3+2}-1}{p^2-1},
\end{gather*}
leading to the given result.
\end{proof}

Note that Proposition \ref{Prop_exp_p_rank_3} is valid also in the case $\lambda_3=0$, when it reduces to Proposition \ref{Prop_exp_p}.

\begin{exm} We have
\begin{align*}
|\cale_{p^i}(\mathbb{Z}_{p^4}\times\mathbb{Z}_{p^2}\times\mathbb{Z}_{p^2})| = \begin{cases}
1,&i=0,\\
2p^2+2p+3,&i=1,\\
3p^4+4p^3+6p^2+3p+3,&i=2,\\
3p^4+2p^3+2p^2+p+1,&i=3 \text{\rm{ or }} i=4. \end{cases}
\end{align*}
\end{exm}

\begin{cor} \label{Cor_total_nr_subgroups_3} The total number of subgroups of $G_{\lambda}=\mathbb{Z}_{p^{\lambda_1}} \times
\mathbb{Z}_{p^{\lambda_2}} \times \mathbb{Z}_{p^{\lambda_3}}$, where $\lambda_1 \geq \lambda_2 \geq \lambda_3\geq 1$, is
\begin{equation*}
\frac{A}{(p^2-1)^2(p-1)},
\end{equation*}
where
\begin{align*}
A = & (\lambda_3+1)(\lambda_1-\lambda_2+1)p^{\lambda_2+\lambda_3+5} +
2(\lambda_3+1)p^{\lambda_2+\lambda_3+4}  \\
&  - 2(\lambda_3+1)(\lambda_1-\lambda_2)p^{\lambda_2+\lambda_3+3} -
2(\lambda_3+1)p^{\lambda_2+\lambda_3+2} \\
&+ (\lambda_3+1)(\lambda_1-\lambda_2-1)p^{\lambda_2+\lambda_3+1} -
(\lambda_1+\lambda_2-\lambda_3+3)p^{2\lambda_3+4} \\
& -2 p^{2\lambda_3+3} + (\lambda_1 + \lambda_2 - \lambda_3-1) p^{2\lambda_3+2}\\
& + (\lambda_1 +\lambda_2 + \lambda_3+5) p^2 + 2p -(\lambda_1 + \lambda_2 + \lambda_3 +1).
\end{align*}
\end{cor}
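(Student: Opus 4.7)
The plan is to sum the quantities $|\cale_{p^i}(G_\lambda)|$ given by Proposition \ref{Prop_exp_p_rank_3} over $i=0,1,\ldots,\lambda_1$, since the total number of subgroups equals $\sum_{i=0}^{\lambda_1}|\cale_{p^i}(G_\lambda)|$. The $i=0$ term contributes the trivial subgroup, giving $1$, and then the three piecewise formulas of Proposition \ref{Prop_exp_p_rank_3} furnish three sub-sums over the ranges $1\le i\le\lambda_3$, $\lambda_3<i\le\lambda_2$, and $\lambda_2<i\le\lambda_1$.

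The key observation is that within each of these ranges, $|\cale_{p^i}(G_\lambda)|$ is (as a function of $i$) a linear combination of a constant, a term of the form $p^{ai+b}$, and a term of the form $i\,p^{ai+b}$. Each such term sums in closed form via the elementary identities
\begin{equation*}
\sum_{i=m}^{M} p^{i} = \frac{p^{M+1}-p^{m}}{p-1}, \qquad \sum_{i=m}^{M} i\,p^{i} = \frac{d}{dp}\!\left(\sum_{i=m}^{M}p^{i}\right)\cdot p,
\end{equation*}
or, more conveniently, by the standard formula for $\sum i\,x^{i}$. Concretely, for the first range I would split $|\cale_{p^i}(G_\lambda)|$ into the ``constant'' piece, the $p^{2i-1}$ piece (without the factor $i$), and the two $i\,p^{2i+c}$ pieces, and evaluate each separately. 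For the middle range the summand is affine in $p^{i}$ plus a constant, and for the third range it is actually independent of $i$, so that sub-sum equals $(\lambda_1-\lambda_2)$ times the value of $|\cale_{p^{\lambda_2+1}}(G_\lambda)|$.

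After collecting everything over the common denominator $(p^2-1)^2(p-1)$, the result is a Laurent polynomial in $p$ whose exponents naturally group into three blocks: terms of the form $p^{\lambda_2+\lambda_3+j}$ coming from the second and third ranges, terms of the form $p^{2\lambda_3+j}$ coming from the boundary of the first and second ranges, and low-degree terms from the evaluation at $i=0,1,\ldots,\lambda_3$. These three blocks correspond exactly to the three groups of monomials in the expression for $A$.

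The main obstacle is purely bookkeeping: each of the three partial sums produces several monomials after clearing denominators, and many of them cancel when the three sums are added. I would arrange the computation by tabulating the coefficient contributions to each monomial $p^{\lambda_2+\lambda_3+j}$, $p^{2\lambda_3+j}$, and $p^{j}$ separately, then verify cancellation term by term and check the boundary case $\lambda_1=\lambda_2$ (where the third sum is empty) and $\lambda_2=\lambda_3$ (where the second sum is empty) to make sure no off-by-one error has crept in. A useful final sanity check is to specialise to $\lambda_3=0$ and confirm that $A/((p^2-1)^2(p-1))$ reduces to formula \eqref{s_p} of Corollary \ref{Cor_total_nr_subgroups_2}, in agreement with the remark following Proposition \ref{Prop_exp_p_rank_3}.
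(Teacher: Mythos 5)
Your proposal is correct and matches the paper's proof exactly: the paper likewise writes the total as $\sum_{i=0}^{\lambda_1}|\cale_{p^i}(G_{\lambda})|$, splits it into the three ranges $0\le i\le\lambda_3$, $\lambda_3<i\le\lambda_2$, $\lambda_2<i\le\lambda_1$, and sums the closed forms from Proposition \ref{Prop_exp_p_rank_3}. Your additional remarks on organizing the bookkeeping and the $\lambda_3=0$ sanity check are sensible but do not change the argument.
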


\begin{proof} The total number of subgroups of $G_{\lambda}$ is
\begin{equation*}
\sum_{i=0}^{\lambda_1} |\cale_{p^i}(G_{\lambda})|=  \sum_{0\leq i\leq \lambda_3} |\cale_{p^i}(G_{\lambda})| +
\sum_{\lambda_3<i\leq \lambda_2} |\cale_{p^i}(G_{\lambda})| + \sum_{\lambda_2 < i\leq \lambda_1} |\cale_{p^i}(G_{\lambda})|
\end{equation*}
and summing the quantities given in Proposition \ref{Prop_exp_p_rank_3} we deduce the result.
\end{proof}

We remark that an equivalent formula to that given in Corollary \ref{Cor_total_nr_subgroups_3} was obtained in \cite[Cor.\ 2.2]{Oh2013}
by using different arguments. Corollary \ref{Cor_total_nr_subgroups_3} is valid also in the case $\lambda_3=0$, when it reduces to Corollary
\ref{Cor_total_nr_subgroups_2}.

\begin{exm} The total number of subgroups of $\mathbb{Z}_{p^4} \times \mathbb{Z}_{p^2}\times \mathbb{Z}_{p^2}$ is
$9p^4+8p^3+12p^2+7p+9$.
\end{exm}

Next consider the number of subgroups of exponent $p$ (that is, the
number of elementary abelian subgroups) in $G_{\lambda}$, which
equals the total number of nontrivial subgroups of $(\Z_p)^k$. Since
$(\Z_p)^k$ is a $k$--dimensional linear space over $\Z_p$, the
number in question is exactly the total number of nonzero subspaces,
which is, as well--known, $\sum_{i=1}^k \binom{k}{i}_p$ (Galois
number). So we have the next result.

\begin{prop} \label{Prop_elem} Let $G_{\lambda}=\mathbb{Z}_{p^{\lambda_1}}
\times\mathbb{Z}_{p^{\lambda_2}}\times\cdots\times\mathbb{Z}_{p^{\lambda_k}}$
with $\lambda_1\geq\lambda_2\geq \ldots \geq\lambda_k\geq 1$. Then
\begin{equation*}
|\cale_p(G_{\lambda})|= \dd\sum_{r=1}^k \binom{k}{r}_{\hspace{-1mm}p}.
\end{equation*}
\end{prop}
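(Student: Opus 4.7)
The plan is to reduce the counting problem to one of subspaces of a single vector space. First I would note that if $H \leq G_\lambda$ has exponent dividing $p$, then every element of $H$ has order $1$ or $p$, so $H$ is contained in the $p$-torsion subgroup
\[
G_\lambda[p] = \{x \in G_\lambda : px = 0\}.
\]
Because each $\lambda_i \geq 1$, each factor $\mathbb{Z}_{p^{\lambda_i}}$ contributes a unique subgroup of order $p$, and therefore $G_\lambda[p] \cong (\mathbb{Z}_p)^k$. Conversely every subgroup of $G_\lambda[p]$ has exponent dividing $p$, and the only one of exponent $1$ is the trivial subgroup. Hence subgroups $H$ of exponent exactly $p$ are in bijection with nonzero $\mathbb{F}_p$-subspaces of $(\mathbb{Z}_p)^k$.

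Viewing $(\mathbb{Z}_p)^k$ as a $k$-dimensional vector space over $\mathbb{F}_p$, it is standard that the number of $r$-dimensional subspaces equals the Gaussian coefficient $\binom{k}{r}_p$. Summing over $r = 1, \ldots, k$ immediately gives the claimed formula. This is the route the authors sketch in the paragraph preceding the statement, and it is the cleanest approach.

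As a cross-check (and an alternative that stays inside the framework of this section), one can instead invoke Theorem \ref{Th_subgroups_type} directly. Subgroups of exponent $p$ are precisely the subgroups of type $\mu = (1,1,\ldots,1,0,\ldots)$ with $r$ ones, where $1 \leq r \leq k$. The conjugate partitions are $\lambda' = (a_1,a_2,\ldots)$ with $a_1 = k$, and $\mu' = (r,0,\ldots)$, so $b_1 = r$ and $b_i = 0$ for $i \geq 2$. All factors in the product defining $\alpha_\lambda(\mu;p)$ collapse to $1$ except the $i=1$ factor, which yields $\binom{k}{r}_p$. Summing $\alpha_\lambda(\mu;p) = \binom{k}{r}_p$ over the admissible values of $r$ recovers the same expression.

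There is no real obstacle here; the only thing that needs to be justified carefully is that $G_\lambda[p] \cong (\mathbb{Z}_p)^k$ with equality of rank $k$, which uses the hypothesis $\lambda_k \geq 1$. (If some $\lambda_i$ were zero, that factor would contribute no $p$-torsion and the rank would drop.) Everything else is either a standard subspace count or a one-line verification from Theorem \ref{Th_subgroups_type}.
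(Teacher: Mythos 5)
Your proposal is correct and matches the paper: your primary torsion-subgroup/subspace-counting argument is exactly the reduction the authors state in the paragraph preceding the proposition, and your cross-check via Theorem \ref{Th_subgroups_type} with $\lambda'=(k,0,\ldots)$, $\mu'=(r,0,\ldots)$ is precisely the direct proof they give. No gaps.
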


We give a direct proof of this formula based on Theorem \ref{Th_subgroups_type}.

\begin{proof} We use Theorem \ref{Th_subgroups_type} in the case $\lambda=(1,1,\ldots,1,0,\ldots)$,
where the number of $1$'s is $k$, and $\mu=(1,1,\ldots,1,0,\ldots)$,
where the number of $1$'s is $r$ with $1\leq r\leq k$. Here
$\lambda'=(k,0,0,\ldots)$, $\mu'=(r,0,0,\ldots)$ and obtain that the
number of subgroups of type $\mu$ is $\binom{k}{r}_p$, while the
number of subgroups of exponent $1$ is exactly $\sum_{r=1}^k
\binom{k}{r}_p$.
\end{proof}

\begin{exm} We have for any $\lambda_1\geq\lambda_2\geq \lambda_3\geq \lambda_4\geq
1$,
\begin{equation*}
|\cale_p(\mathbb{Z}_{p^{\lambda_1}}\times \mathbb{Z}_{p^{\lambda_2}}
\times\mathbb{Z}_{p^{\lambda_3}} \times \mathbb{Z}_{p^{\lambda_4}})|
= p^4+3p^3+4p^2+ 3p+4.
\end{equation*}
\end{exm}

Concerning the number of subgroups of exponent $p^2$ in
$G_{\lambda}$ we have the next formula.

\begin{prop} \label{Prop_exp_p^2} Let $G_{\lambda}=\mathbb{Z}_{p^{\lambda_1}}\times\mathbb{Z}_{p^{\lambda_2}}
\times\cdots\times\mathbb{Z}_{p^{\lambda_k}}$ with
$\lambda_1\geq\lambda_2\geq \ldots \geq
\lambda_t>\lambda_{t+1}=\ldots = \lambda_k=1$, where $0\leq t\leq k$
is fixed {\rm (}$t=0$ if each $\lambda_j$ is $1$ and $t=k$ if each
$\lambda_j$ is $\geq 2${\rm )}. Then
\begin{equation*}
|\cale_{p^2}(G_{\lambda})|= \dd \sum_{\substack{1\leq r\leq t\\
0\leq s\leq k-r}} p^{r(k-r-s)} \binom{k-r}{s}_{\hspace{-1mm}p}
\binom{t}{r}_{\hspace{-1mm}p},
\end{equation*}
which is zero {\rm (}empty sum{\rm )} for $t=0$.
\end{prop}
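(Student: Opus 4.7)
The plan is to enumerate subgroups of exponent $p^2$ by their isomorphism type $\mu$ and sum $\alpha_\lambda(\mu;p)$ from Theorem \ref{Th_subgroups_type}. A nonincreasing partition $\mu$ with $\mu_1=2$ has the shape $\mu=(\underbrace{2,\ldots,2}_{r},\underbrace{1,\ldots,1}_{s},0,\ldots)$ with $r\geq 1$ and $s\geq 0$. The condition $\mu\preceq\lambda$ then reduces to $\mu_r=2\leq \lambda_r$, which forces $r\leq t$, together with $r+s\leq k$ so that $\mu$ has at most $k$ parts.

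Next I would compute the conjugate partitions needed by the theorem. Under the standing hypothesis $\lambda_1\geq\cdots\geq\lambda_t\geq 2>\lambda_{t+1}=\cdots=\lambda_k=1$, the conjugate $\lambda'=(a_1,a_2,\ldots)$ satisfies $a_1=k$ and $a_2=t$; the entries $a_i$ for $i\geq 3$ are irrelevant, as will be seen. For $\mu$ as above, the conjugate is $b_1=r+s$, $b_2=r$, and $b_i=0$ for $i\geq 3$. Substituting into
\begin{equation*}
\alpha_\lambda(\mu;p)=\prod_{i\geq 1}p^{(a_i-b_i)b_{i+1}}\binom{a_i-b_{i+1}}{b_i-b_{i+1}}_{\hspace{-1mm}p},
\end{equation*}
the $i=1$ factor becomes $p^{(k-r-s)r}\binom{k-r}{s}_p$, the $i=2$ factor becomes $\binom{t}{r}_p$ (since $b_3=0$ the exponent vanishes), and for every $i\geq 3$ both $b_i$ and $b_{i+1}$ vanish, so that factor collapses to $1$.

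Summing $\alpha_\lambda(\mu;p)$ over the admissible range $1\leq r\leq t$, $0\leq s\leq k-r$ yields exactly the claimed identity; when $t=0$ the range of $r$ is empty, matching the stated convention and the evident fact that $G_\lambda=(\mathbb{Z}_p)^k$ has no subgroup of exponent $p^2$. The argument is essentially bookkeeping and I do not foresee a real obstacle; the only point deserving care is verifying that the entries of $\lambda'$ beyond the second index drop out of the product, which is what makes the final answer depend on $k$ and $t$ alone rather than on the full partition $\lambda$.
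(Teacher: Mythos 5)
Your proposal is correct and matches the paper's proof: both enumerate the admissible types $\mu=(2,\ldots,2,1,\ldots,1)$ with $r\leq t$, $r+s\leq k$ and sum $\alpha_\lambda(\mu;p)=p^{r(k-r-s)}\binom{k-r}{s}_p\binom{t}{r}_p$ via Theorem \ref{Th_subgroups_type}. The only cosmetic difference is that the paper first replaces $\lambda$ by $(2,\ldots,2,1,\ldots,1)$ so that $\lambda'=(k,t,0,\ldots)$ exactly, whereas you keep the original $\lambda$ and observe that the factors involving $a_i$ for $i\geq 3$ collapse to $1$ because $b_i=0$ there.
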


\begin{proof} Let $t\geq 1$. Use Theorem \ref{Th_subgroups_type} for
$$
\lambda=(2,2,\ldots,2,1,1,\ldots,1,0,\ldots),
$$
where the number of
$2$'s is $t$ and the number of $1$'s is $k-t$ and
$$
\mu=(2,2,\ldots,2,1,1,\ldots,1,0,\ldots),
$$
where the number of
$2$'s is $r$ and the number of $1$'s is $s$ with $1\leq r\leq t$,
$0\leq s\leq k-r$. Now $\lambda'=(k,t,0,\ldots)$,
$\mu'=(r+s,r,0,\ldots)$ and obtain that the number of subgroups of
type $\mu$ is
\begin{equation*}
p^{r(k-r-s)} \binom{k-r}{s}_{\hspace{-1mm}p}
\binom{t}{r}_{\hspace{-1mm}p}
\end{equation*}
and the number of subgroups of exponent $p^2$ is deduced by summing
over $r$ and $s$.
\end{proof}

\begin{exm} {\rm ($k=4$, $t=2$)} We have
\begin{equation*}
|\cale_{p^2}(\mathbb{Z}_{p^4} \times \mathbb{Z}_{p^2} \times
\mathbb{Z}_{p} \times \mathbb{Z}_{p})| = p^5+5p^4+6p^3+4p^2+2p+2.
\end{equation*}
\end{exm}

In what follows let
$G_{\lambda}=\mathbb{Z}_{p^{\lambda_1}}\times\mathbb{Z}_{p^{\lambda_2}}
\times\cdots\times\mathbb{Z}_{p^{\lambda_k}}$ and $G_{\kappa}=
\mathbb{Z}_{p^{\kappa_1}}\times\mathbb{Z}_{p^{\kappa_2}}\times
\cdots\times\mathbb{Z}_{p^{\kappa_{\ell}}}$ be two finite abelian
$p$-groups, where $\lambda_1\geq\lambda_2\geq \ldots
\geq\lambda_k\geq 1$ and $\kappa_1\geq\kappa_2\geq \ldots
\geq\kappa_{\ell}\geq 1$. Assume that $G_{\lambda}$ and $G_{\kappa}$
have the same number of subgroups of exponent $p^i$ for every $i$,
i.e.
\begin{equation*}
|\cale_{p^i}(G_{\lambda})|=|\cale_{p^i}(G_{\kappa})| \quad (i\geq 0).
\end{equation*}

Then $\lambda_1=\kappa_1$. On the other hand, since the function
\begin{equation*}
f:\N^* \to \N^*, \quad f(k)=\sum_{i=1}^k \binom{k}{i}_p
\end{equation*}
is one--to--one, by Proposition \ref{Prop_elem} we infer that
$k=\ell$. Clearly, if $k=1$ one obtains $G_{\lambda}\cong
G_{\kappa}$. The same thing can be also said for $k=2$ and $k=3$ by
Propositions \ref{Prop_exp_p} and \ref{Prop_exp_p_rank_3}. Inspired
by these remarks, we state and prove the following result.

\begin{prop} \label{Prop_Isom} Two finite abelian $p$-groups $G_{\lambda}$ and $G_{\kappa}$ are isomorphic
if and only if they have the same number of subgroups of exponent
$p^i$, for every $i\geq 0$.
\end{prop}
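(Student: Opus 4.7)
The forward direction is immediate; for the converse, assuming $|\cale_{p^i}(G_\lambda)|=|\cale_{p^i}(G_\kappa)|$ for all $i\geq 0$, I plan to recover the conjugate partition $\lambda'=(a_1,a_2,\ldots)$ of $\lambda$ from the exponent counts, which by the same argument applied to $\kappa$ will yield $\lambda=\kappa$ and hence $G_\lambda\cong G_\kappa$.

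The identities $a_1=k=\ell$ (via Proposition \ref{Prop_elem}) and $\lambda_1=\kappa_1$ are already at hand from the discussion preceding the statement. I would then induct on $j\geq 2$: assuming $a_r=a_r'$ for every $r<j$, I show $a_j=a_j'$. The key observation driving the induction is that every subgroup of $G_\lambda$ of exponent dividing $p^j$ lies inside $\Omega_j(G_\lambda):=\{x\in G_\lambda:p^jx=0\}$, an abelian $p$-group whose conjugate partition is the truncation $(a_1,\ldots,a_j,0,\ldots)$ of $\lambda'$. Hence $|\cale_{p^j}(G_\lambda)|=|\cale_{p^j}(\Omega_j(G_\lambda))|$ depends on $\lambda$ only through $(a_1,\ldots,a_j)$, and the inductive step reduces to the claim that, with $a_1,\ldots,a_{j-1}$ fixed, this number is a strictly monotone function of $a_j\in\{0,1,\ldots,a_{j-1}\}$.

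This monotonicity is the main technical step and the one I expect to be the principal obstacle. To establish it, I consider two candidates $a_j<\tilde a_j$ and write
\begin{gather*}
\Omega_j(G_\lambda)=A\times(\mathbb{Z}_{p^j})^{a_j}\times(\mathbb{Z}_{p^{j-1}})^{a_{j-1}-a_j},\\
\Omega_j(G_{\tilde\lambda})=A\times(\mathbb{Z}_{p^j})^{\tilde a_j}\times(\mathbb{Z}_{p^{j-1}})^{a_{j-1}-\tilde a_j},
\end{gather*}
with $A$ the common factor determined by $a_1,\ldots,a_{j-2}$. An explicit embedding of $\Omega_j(G_\lambda)$ as a proper subgroup of $\Omega_j(G_{\tilde\lambda})$ can be built by matching $A$ with $A$, placing $a_j$ of the $\mathbb{Z}_{p^j}$-factors identically, and routing $\tilde a_j-a_j$ of the $\mathbb{Z}_{p^{j-1}}$-factors into the remaining $\mathbb{Z}_{p^j}$-factors as their unique order-$p^{j-1}$ subgroups. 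This restricts to an injection $\cale_{p^j}(\Omega_j(G_\lambda))\hookrightarrow\cale_{p^j}(\Omega_j(G_{\tilde\lambda}))$, and strictness follows because $\Omega_j(G_{\tilde\lambda})$ itself has exponent $p^j$ (since $\tilde a_j\geq 1$) yet does not lie in the image, the embedded copy being proper. The resulting strict monotonicity forces $a_j=a_j'$ and closes the induction.
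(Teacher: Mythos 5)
Your argument is correct, but it is genuinely different from the paper's. The paper runs a reverse induction on the index $t$ of the partition itself: assuming $\lambda_i=\kappa_i$ for $i\geq t+1$ and, for contradiction, $\lambda_t>\kappa_t$, it truncates both partitions at the level $\lambda_t$ and compares the counts of subgroups of exponent $p^{\lambda_t}$ via the monotonicity $\alpha_{\sigma}(\mu;p)\leq\alpha_{\tau}(\mu;p)$ for $\mu\preceq\sigma\preceq\tau$, read off from Theorem \ref{Th_subgroups_type}; strictness comes from the extra term $\alpha_{\lambda^{\lambda_t}}(\lambda^{\lambda_t};p)=1$. You instead recover the conjugate partition entry by entry, reduce the exponent-$p^j$ count to $\Omega_j(G_{\lambda})$ (which is exactly the reduction the paper records just before Proposition \ref{Prop_exp_p}), and get strict monotonicity in $a_j$ from an explicit group embedding, with the ambient group itself supplying the subgroup missing from the image. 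The two proofs share the same engine --- strict monotonicity of $|\cale_{p^i}|$ under enlarging the group while freezing the lower layers, with strictness witnessed by the top subgroup --- but yours is the more elementary route: it bypasses Theorem \ref{Th_subgroups_type} entirely (even the base case via Proposition \ref{Prop_elem} could be replaced by your $j=1$ embedding), at the cost of having to identify the isomorphism type of $\Omega_j$ and write the embedding down, whereas the paper's version leans on the $\alpha$-formula and the dominance order. One small imprecision on your side: the common factor $A$ is determined by $a_1,\dots,a_{j-1}$ rather than $a_1,\dots,a_{j-2}$ (the multiplicity of $\Z_{p^{j-2}}$ is $a_{j-2}-a_{j-1}$); since all of these are fixed in the comparison, this is harmless.
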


\begin{proof} Let $\lambda = ( \lambda_1, \lambda_2,\ldots,\lambda_k,0,\ldots)$ and $\kappa=(\kappa_1,\kappa_2,\ldots,\kappa_{\ell},0,\ldots)$,
where $\lambda_k,\kappa_{\ell}>0$, be partitions such that for
$p$-groups $G_{\lambda}$ and $G_{\kappa}$ one has $|\cale_{p^i}(G_{\lambda})| = |\cale_{p^i}(G_{\kappa})|$, for every $i\geq 0$.
As noted before, $\lambda_1=\kappa_1$ and $k=\ell$ hold true.

Let us define $\lambda_{k+1} = \kappa_{k+1} = 0$. We prove, by reverse induction on $t\le k+ 1$  that
$\lambda_t = \kappa_t$. So, let us assume that $\lambda_i = \kappa_i$ for all $k + 1\geq i\geq t+1$, and prove that $\lambda_t = \kappa_t$.

Suppose to the contrary that w.l.o.g. $\lambda_t > \kappa_t$. Since $\lambda_1 = \kappa_1$, one has $t>1$. Consider the partitions
$\lambda^{\lambda_t}$ and $\kappa^{\lambda_t}$ defined with
\begin{align*}
\lambda^{\lambda_t} & = (\underbrace{\lambda_t,\ldots,\lambda_t}_t, \gamma_{t+1},\ldots,\gamma_k,0,\ldots), \\
\kappa^{\lambda_t} & = (\underbrace{\lambda_t,\ldots,\lambda_t}_s, \kappa_{s+1},\ldots,\kappa_t, \gamma_{t+1},\ldots,\gamma_k,0,\ldots),
\end{align*}
where $\gamma_i = \lambda_i = \kappa_i$, for $i\geq t+1$, and $s = \max \{j: \kappa_j \geq \lambda_t\}$. Note that by our assumption $s < t$, and
since $\kappa_1=\lambda_1\geq \lambda_t$ also $s\geq 1$.

From the definition of numbers $\alpha_{\omega}(\mu;p)$, it is clear that for any three partitions $\mu,\sigma,\tau$, which satisfy
$\mu \preceq\sigma \preceq \tau$, it holds
\begin{equation*}
\alpha_{\sigma}(\mu;p) \leq \alpha_{\tau}(\mu;p).
\end{equation*}

Using this remark, the fact that $\kappa^{\lambda_t} \precneqq \lambda^{\lambda_t}$ and $\alpha_{\lambda^{\lambda_t}}(\lambda^{\lambda_t};p)=1$,
one has
\begin{gather*}
1+ |\cale_{p^{\lambda_t}}(G_{\kappa})|= 1+ |\cale_{p^{\lambda_t}}(G_{\kappa^{\lambda_t}})|
 = 1+ \sum_{\mu \preceq \kappa^{\lambda_t},\ \mu_1=\lambda_t} \alpha_{\kappa^{\lambda_t}}(\mu;p) \\
 \leq \alpha_{\lambda^{\lambda_t}}(\lambda^{\lambda_t};p) + \sum_{\mu \preceq \kappa^{\lambda_t},\ \mu_1=\lambda_t}
\alpha_{\lambda^{\lambda_t}}(\mu;p)
 \leq \sum_{\mu \preceq \lambda^{\lambda_t},\ \mu_1=\lambda_t} \alpha_{\lambda^{\lambda_t}}(\mu;p)\\
 = |\cale_{p^{\lambda_t}}(G_{\lambda^{\lambda_t}})|= |\cale_{p^{\lambda_t}}(G_{\lambda})|,
\end{gather*}
a contradiction.
\end{proof}

\begin{cor} Two arbitrary finite abelian groups are isomorphic if and only if they have the same
number of subgroups of any exponent.
\end{cor}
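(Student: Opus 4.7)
The forward direction is immediate: isomorphic groups plainly have the same number of subgroups of every exponent. For the converse, I would reduce the statement to the $p$-group case already handled by Proposition \ref{Prop_Isom}, using the primary decomposition together with the product formula \eqref{eq_prod}.

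More concretely, let $G$ and $G'$ be finite abelian groups with $|\cale_d(G)|=|\cale_d(G')|$ for every $d\in \N^*$, and write their primary decompositions as $G=\prod_i G_{p_i}$ and $G'=\prod_j G'_{q_j}$. The first step is to show that the set of prime divisors of $|G|$ and of $|G'|$ coincide. For any prime $p$, the group $G$ admits a nontrivial subgroup of exponent $p$ if and only if $p$ divides $|G|$; indeed, by \eqref{eq_prod}, $|\cale_p(G)|=|\cale_p(G_p)|\prod_{p_i\ne p}|\cale_1(G_{p_i})|=|\cale_p(G_p)|$, since $|\cale_1(H)|=1$ for any $H$. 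Hence $|\cale_p(G)|\ge 2$ precisely when $p\mid |G|$, and the equality $|\cale_p(G)|=|\cale_p(G')|$ forces the two prime sets to agree.

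Fix now a common prime $p$, and denote the corresponding Sylow $p$-subgroups by $G_p$ and $G'_p$. Applying \eqref{eq_prod} with $d=p^i$ and using $|\cale_1(H)|=1$ for the other primary factors, one obtains
\begin{equation*}
|\cale_{p^i}(G_p)| = |\cale_{p^i}(G)| = |\cale_{p^i}(G')| = |\cale_{p^i}(G'_p)|
\end{equation*}
for every $i\ge 0$. Proposition \ref{Prop_Isom} then gives $G_p\cong G'_p$, and assembling over all primes $p$ dividing $|G|=|G'|$ yields $G\cong G'$.

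I don't anticipate a serious obstacle here: the only subtlety is making sure that the exponents of the individual primary components can be read off from the exponents of subgroups of the whole group, and the factorisation \eqref{eq_prod} together with the trivial observation $|\cale_1(\cdot)|=1$ takes care of that cleanly. Everything else is bookkeeping plus an invocation of Proposition \ref{Prop_Isom}.
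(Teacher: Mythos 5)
Your proof is correct and follows exactly the route the paper intends: the corollary is stated there as an immediate consequence of Proposition \ref{Prop_Isom} via the primary decomposition and the product formula \eqref{eq_prod}, which is precisely your reduction. (Only a trivial slip: $|\cale_p(G)|\ge 1$, not necessarily $\ge 2$, when $p$ divides $|G|$ --- e.g.\ $G=\Z_p$ --- but the prime sets still agree since $|\cale_p(G)|>0$ iff $p\mid |G|$.)
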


Finally, we note that another interesting problem is to find the polynomial
$|\cale_{p^i}(G_{\lambda})|$ in the case $k=4$.

\section{Second approach}

We need the next result giving the representation of subgroups of
the group $\Z_m\times \Z_n$. For every $m,n\in \N^*$ let
\begin{gather*}
J_{m,n}:=\left\{(a,b,c,d,\ell)\in (\N^*)^5: a\mid m, b\mid a, c\mid
n, d\mid c, \frac{a}{b}=\frac{c}{d}, \right. \\ \left. \ell \le \frac{a}{b}, \,
\gcd\left(\ell,\frac{a}{b} \right)=1\right\}.
\end{gather*}

Note that here $\gcd(b,d)\cdot \lcm(a,c)=ad$ and $\gcd(b,d)\mid \lcm(a,c)$.

For $(a,b,c,d,\ell)\in J_{m,n}$ define
\begin{equation*}
K_{a,b,c,d,\ell}:= \left\{\left(i\frac{m}{a}, i\ell \frac{n}{c}+j\frac{n}{d}\right): 0\le i\le a-1, 0\le
j\le d-1\right\}.
\end{equation*}

\begin{thm} \label{Th_repr} {\rm (\cite[Th.\ 3.1]{Tot2014})}  Let $m,n\in \N^*$.

i) The map $(a,b,c,d,\ell)\mapsto K_{a,b,c,d,\ell}$ is a bijection
between the set $J_{m,n}$ and the set of subgroups of $(\Z_m \times
\Z_n,+)$.

ii) The invariant factor decomposition of the subgroup
$K_{a,b,c,d,\ell}$ is
\begin{equation*}
K_{a,b,c,d,\ell} \simeq \Z_{\gcd(b,d)} \times \Z_{\lcm(a,c)}.
\end{equation*}

iii) The order of the subgroup $K_{a,b,c,d,\ell}$ is $ad$ and its exponent is $\lcm(a,c)$.
\end{thm}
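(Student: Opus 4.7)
The plan is to prove (i) by constructing a two-sided inverse to $(a,b,c,d,\ell)\mapsto K_{a,b,c,d,\ell}$: from any $H\le\Z_m\times\Z_n$, read off each of the five invariants as a natural substructure of $H$. Parts (ii) and (iii) then fall out of a direct computation on the two canonical generators $g_1=(m/a,\ell n/c)$ and $g_2=(0,n/d)$.

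Concretely, given $H$, let $\pi_1,\pi_2$ be the coordinate projections. I would define $a,c$ by $\pi_1(H)=(m/a)\Z_m$ and $\pi_2(H)=(n/c)\Z_n$, and $b,d$ by $H\cap(\Z_m\times\{0\})=(m/b)\Z_m\times\{0\}$ and $H\cap(\{0\}\times\Z_n)=\{0\}\times(n/d)\Z_n$. Goursat's lemma applied to the induced isomorphism $\pi_1(H)/\pi_1(H\cap\ker\pi_2)\simeq \pi_2(H)/\pi_2(H\cap\ker\pi_1)$ gives $a/b=c/d$. Taking any lift $(m/a,y)\in H$, one has $y\in\pi_2(H)$, so $y=\ell n/c$ with $\ell$ determined modulo $c/d=a/b$; choose the representative in $\{1,\ldots,a/b\}$. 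A short argument then shows $H=\langle g_1,g_2\rangle$: for $(x,z)\in H$ we have $x=im/a$, and $(x,z)-ig_1\in H\cap\ker\pi_1=\langle g_2\rangle$. The required coprimality $\gcd(\ell,a/b)=1$ is now forced, since $\pi_2(H)=\pi_2(\langle g_1,g_2\rangle)=\gcd(\ell,c/d)(n/c)\Z_n$ must coincide with $(n/c)\Z_n$. All five invariants are thereby recovered from $H$, giving surjectivity and injectivity together.

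For (iii), the $ad$ pairs $(im/a,i\ell n/c+jn/d)$ with $0\le i<a$, $0\le j<d$ are pairwise distinct (equality forces $a\mid(i-i')$, whence $d\mid(j-j')$), so $|K_{a,b,c,d,\ell}|=ad$. The exponent is the lcm of the orders of $g_1$ and $g_2$, namely $d$ and $\lcm(a,c/\gcd(c,\ell))$; using $\gcd(\ell,c/d)=1$ to rewrite $\gcd(c,\ell)=\gcd(d,\ell)$ and then the identity $\lcm(d,c/\gcd(d,\ell))=c$ (which again rests on $\gcd(c/d,\gcd(d,\ell))=1$), one collapses the expression to $\lcm(a,c)$. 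Part (ii) is then automatic: any subgroup of $\Z_m\times\Z_n$ has rank at most two, so $K_{a,b,c,d,\ell}\simeq\Z_{s_2}\times\Z_{s_1}$ with $s_2\mid s_1$, and then $s_1=\exp K=\lcm(a,c)$ and $s_2=|K|/s_1=\gcd(b,d)$ via the identity $ad=\gcd(b,d)\lcm(a,c)$ that follows from $a/b=c/d$.

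The main obstacle is the coprimality step in (i): rather than an imposed artificial constraint, $\gcd(\ell,a/b)=1$ has to emerge automatically from the very definition of $c$ through $\pi_2(H)$. The same observation underlies the exponent computation in (iii), where any common factor of $\ell$ with $d$ (permitted by $J_{m,n}$) is neutralized inside the lcm because it lies in the direction coprime to $c/d$.
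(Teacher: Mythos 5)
The paper does not prove this theorem: it is quoted verbatim from \cite[Th.\ 3.1]{Tot2014}, so there is no internal proof to compare against. Your argument is correct and is essentially the argument of the cited source, which likewise derives the parametrization via Goursat's lemma: the invariants $a,c$ from the projections $\pi_1(H),\pi_2(H)$, the invariants $b,d$ from the intersections with the factors, $a/b=c/d$ from the induced isomorphism of quotients, and $\ell$ from a lift of the generator of $\pi_1(H)$, with $\gcd(\ell,a/b)=1$ forced by $\pi_2(\langle g_1,g_2\rangle)=(n/c)\Z_n$. Your reductions in (iii), namely $\gcd(\ell,c)=\gcd(\ell,d)$ and $\lcm(d,c/\gcd(d,\ell))=c$ under $\gcd(\ell,c/d)=1$, check out prime by prime, and (ii) follows as you say from $ad=\gcd(b,d)\lcm(a,c)$ (a consequence of $a/b=c/d$, noted in the paper just before the theorem). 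The only step you assert rather than carry out is the reverse verification that the recovery procedure applied to $K_{a,b,c,d,\ell}$ returns the same quintuple (needed for injectivity); this is a routine computation of $\pi_1(K)$, $\pi_2(K)$ and $K\cap\ker\pi_i$ of the same kind as the ones you do perform, so it is an omission of detail, not a gap.
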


Let $s_E(m,n)$ stand for the number of subgroups of exponent $E$ of the group $\Z_m\times \Z_n$.

\begin{prop} \label{Prop_m_n} For every $m,n\in \N^*$, $E\mid \lcm(m,n)$ we have
\begin{align}
s_E(m,n) & = \sum_{\substack{i\mid m, j\mid n\\ \lcm(i,j)=E}} \gcd(i,j) \label{1_1} \\
         & = \frac1{E} \sum_{\substack{i\mid m, j\mid n\\ \lcm(i,j)=E}} ij \label{1_2}.
\end{align}
\end{prop}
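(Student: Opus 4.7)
The plan is to combine Theorem \ref{Th_repr} with a reorganization of the parametrizing set $J_{m,n}$ grouped by the pair $(a,c)$, followed by Euler's totient identity.

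First, by part (iii) of Theorem \ref{Th_repr}, the subgroups of $\Z_m \times \Z_n$ of exponent $E$ are in bijection with the quintuples $(a,b,c,d,\ell) \in J_{m,n}$ satisfying $\lcm(a,c)=E$. So I would write
\begin{equation*}
s_E(m,n) = \sum_{\substack{a \mid m,\, c \mid n \\ \lcm(a,c) = E}} \#\{(b,d,\ell) : (a,b,c,d,\ell) \in J_{m,n}\},
\end{equation*}
and focus on the inner count for a fixed pair $(a,c)$.

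Next, I would parametrize the admissible $(b,d)$ by the common ratio $k := a/b = c/d$. The conditions $b \mid a$ and $d \mid c$ force $k$ to be a positive integer dividing both $a$ and $c$, hence $k \mid \gcd(a,c)$; conversely every such $k$ gives a unique $(b,d) = (a/k, c/k)$ with the required property. For fixed $k$, the number of valid $\ell$ is exactly the number of integers in $[1,k]$ coprime to $k$, namely $\varphi(k)$. Using the classical identity $\sum_{k \mid N} \varphi(k) = N$ with $N = \gcd(a,c)$, the inner count collapses to
\begin{equation*}
\sum_{k \mid \gcd(a,c)} \varphi(k) = \gcd(a,c),
\end{equation*}
which yields formula \eqref{1_1}.

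Finally, to pass from \eqref{1_1} to \eqref{1_2}, I would invoke the elementary identity $\gcd(i,j) \cdot \lcm(i,j) = ij$: whenever $\lcm(i,j) = E$ in the summation, $\gcd(i,j) = ij/E$, and pulling the factor $1/E$ outside gives the second formula. There is no genuine obstacle here; the only point that requires care is verifying that the map $k \mapsto (b,d)=(a/k, c/k)$ exhausts all pairs satisfying $a/b = c/d$ with $b \mid a$ and $d \mid c$, but this is immediate from the divisibility constraints.
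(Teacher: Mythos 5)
Your proof is correct and is essentially the same as the paper's: the authors also fix the pair $(a,c)$ (their variables $i=a$, $j=c$ after substitution), reduce the inner count to $\sum_{e\mid \gcd(a,c)}\phi(e)=\gcd(a,c)$ via the ratio $e=a/b=c/d$, and obtain \eqref{1_2} from $\gcd(i,j)\lcm(i,j)=ij$. The only difference is presentational — they carry out the same grouping by formal substitution inside a multiple divisor sum rather than by an explicit fibration over $(a,c)$.
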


\begin{proof} According to Theorem \ref{Th_repr},
\begin{align*}
s_E(m,n)= \sum_{\substack{a\mid m\\ b\mid a}} \sum_{\substack{c\mid n\\ d\mid c}} \sum_{\substack{a/b=c/d=e \\ \lcm(a,c)=E}} \phi(e),
\end{align*}
where $\phi$ is Euler's totient function. This can be written (with $m=ax$, $a=by$, $n=cz$, $c=dt$) as

\begin{gather*}
s_E(m,n) = \sum_{\substack{bxe=m \\ dze=n \\ e \lcm(b,d)=E}} \phi(e)
 = \sum_{\substack{ix=m \\ jz=n}} \sum_{\substack{be=i\\ de=j\\ e \lcm(b,d)=E}} \phi(e) \\
 = \sum_{\substack{i\mid m \\ j\mid n \\ \lcm(i,j)=E}} \sum_{e\mid \gcd(i,j)}  \phi(e)
 = \sum_{\substack{i\mid m \\ j\mid n \\ \lcm(i,j)=E}} \gcd(i,j),
\end{gather*}
which is \eqref{1_1}. Formula \eqref{1_2} is its immediate consequence.
\end{proof}

\begin{rem} {\rm Proposition \ref{Prop_exp_p} is a direct consequence of the above result. The total number $s(m,n)$ of subgroups of the group $\Z_m\times \Z_n$ is (see \cite[Th.\ 3]{HHTW2014}, \cite[Th.\ 4.1]{Tot2014})
\begin{align} \label{s_m_n}
s(m,n) = \sum_{i\mid m, j\mid n} \gcd(i,j)
\end{align}
and \eqref{1_1} shows the distribution of the number of subgroups according to their exponents. Formula \eqref{s_p} can be obtained also
by using \eqref{s_m_n}.}
\end{rem}

\begin{exm} The total number of subgroups of $\Z_{12}\times \Z_{18}$ is $s(12,18)=80$ and we have $s_1(12,18)=1$, $s_2(12,18)=4$,
$s_3(12,18)=5$, $s_4(12,18)=3$, $s_6(12,18)=20$, $s_9(12,18)=4$, $s_{12}(12,18)=15$, $s_{18}(12,18)=16$, $s_{36}(12,18)=12$.
\end{exm}

\begin{cor} \label{Th_n_n} {\rm ($m=n$)} For every $n\in \N^*$ and $E\mid n$,
\begin{align} \label{1_3}
s_E(n,n) & = \sum_{\substack{i\mid E, j\mid E \\ \gcd(E/i,E/j)=1}} \gcd(i,j),
\end{align}
which equals the number of cyclic subgroups of the group $\Z_E\times \Z_E$.
\end{cor}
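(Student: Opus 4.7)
The plan splits into two parts: deriving formula \eqref{1_3} from Proposition \ref{Prop_m_n}, and then identifying the resulting sum with the number of cyclic subgroups of $\Z_E \times \Z_E$.

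For the first part, I would specialize \eqref{1_1} to $m = n$ and convert the condition $\lcm(i,j) = E$ into $\gcd(E/i, E/j) = 1$ using the standard divisor-lattice identity $E/\lcm(i,j) = \gcd(E/i, E/j)$, valid for $i, j \mid E$. The hypothesis $E \mid n$ makes the divisibilities $i \mid n$, $j \mid n$ automatic once $\lcm(i,j) = E$ is assumed, since then $i, j \mid E \mid n$.

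For the second part, I would observe that the right-hand side of \eqref{1_3} depends only on $E$, so one may specialize to $n = E$; this reads $s_E(n,n)$ as the number of subgroups of $G := \Z_E \times \Z_E$ of exponent $E$. To match this with the number of cyclic subgroups, I would invoke Pontryagin duality. A Smith normal form analysis of a lift in $\Z^2$ shows that every subgroup $H \le G$ has invariant factors of the form $\Z_{E/\beta} \times \Z_{E/\alpha}$ for some $\alpha \mid \beta \mid E$, with $G/H \cong \Z_\alpha \times \Z_\beta$; so $H$ has exponent $E$ precisely when $\alpha = 1$, i.e.\ when $G/H$ is cyclic. Since duality $H \mapsto H^\perp$ is a bijection on the subgroup lattice of $G$ with $H^\perp \cong G/H$ (using $G \cong \hat G$), it exchanges exponent-$E$ subgroups of $G$ with cyclic subgroups of $G$, yielding the claim.

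The main obstacle is the type-exchange bookkeeping underlying the duality bijection. A more elementary alternative would be to count both quantities directly using Theorem \ref{Th_repr}: cyclic subgroups of $\Z_E \times \Z_E$ correspond to tuples $(a,b,c,d,\ell) \in J_{E,E}$ with $\gcd(b,d) = 1$, while exponent-$E$ subgroups correspond to those with $\lcm(a,c) = E$, and a routine change of variables collapses both into the same arithmetic sum (for instance $\sum_{d \mid E} \phi(d)\,\tau((E/d)^2)$), bypassing duality.
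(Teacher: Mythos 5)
Your derivation of \eqref{1_3} itself is exactly the paper's: specialize \eqref{1_1} to $m=n$, note that $\lcm(i,j)=E$ forces $i,j\mid E$, and rewrite the condition as $\gcd(E/i,E/j)=1$ (the paper does this by the substitution $i=E/a$, $j=E/b$). Where you genuinely diverge is the second assertion. The paper does not prove that the sum counts the cyclic subgroups of $\Z_E\times\Z_E$; it simply cites \cite[eq.\ (16)]{HHTW2014} for the identity and \cite[Th.\ 8]{HHTW2014} for the group-theoretic fact. You instead give a self-contained argument, and it is correct: since the right side of \eqref{1_3} depends only on $E$ you may take $n=E$; the Smith normal form computation does give $H\cong \Z_{E/\alpha}\times\Z_{E/\beta}$ with $G/H\cong\Z_\alpha\times\Z_\beta$ and $\alpha\mid\beta\mid E$, so $\exp(H)=E/\alpha=E$ exactly when $G/H$ is cyclic; and since $H\mapsto H^{\perp}$ is a lattice bijection with $H^{\perp}\cong G/H$ and $\hat G\cong G$, the exponent-$E$ subgroups are equinumerous with the cyclic ones. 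Your elementary alternative also checks out: in $J_{E,E}$ the exponent-$E$ condition is $e\lcm(b,d)=E$ and the cyclicity condition is $\gcd(b,d)=1$, and both counts reduce to $\sum_{e\mid E}\phi(e)\,\tau\left((E/e)^2\right)$ because the number of ordered pairs of divisors of $N$ with $\lcm$ equal to $N$ and the number of coprime ordered pairs of divisors of $N$ both equal $\tau(N^2)$. What each approach buys: the paper's proof is two lines but outsources the key identification to a reference; yours is self-contained and the duality argument explains structurally why the two counts must agree, at the cost of importing the annihilator formalism.
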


The fact that $s_E(n,n)$ equals the number of cyclic subgroups of the group $\Z_E\times \Z_E$, but without
deriving formula \eqref{1_3} is \cite[Th.\ 8]{HHTW2014}, proved by different arguments.

\begin{proof} In the case $m=n$, for every $E\mid n$ we have by \eqref{1_1},
\begin{gather*}
s_E(n,n)  = \sum_{\substack{i\mid n, j\mid n \\ \lcm(i,j)=E}} \gcd(i,j)
= \sum_{\substack{ia=E, jb=E \\ \lcm(E/a,E/b)=E}} \gcd(i,j) \\
= \sum_{\substack{ia=E, jb=E \\ \gcd(a,b)=1}} \gcd(i,j),
\end{gather*}
giving \eqref{1_3}, which equals the number of cyclic subgroups of the group $\Z_E\times \Z_E$ by
\cite[eq.\ (16)]{HHTW2014}.
\end{proof}

\begin{prop} \label{Prop_sum_exp} For every $m,n\in \N^*$ the sum of exponents of the subgroups of $\Z_m\times \Z_n$
is $\sigma(m)\sigma(n)$, where $\sigma(k)=\sum_{d\mid k} d$.
\end{prop}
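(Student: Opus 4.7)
The plan is to use Proposition \ref{Prop_m_n}, specifically formula \eqref{1_2}, which expresses $E\cdot s_E(m,n)$ in a form that decouples $i$ and $j$ once we sum over $E$. By definition, the sum of exponents of the subgroups of $\Z_m\times\Z_n$ is
\begin{equation*}
S(m,n) = \sum_{E\mid \lcm(m,n)} E\cdot s_E(m,n),
\end{equation*}
since each subgroup of exponent $E$ contributes $E$ to the total.

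Next I would apply \eqref{1_2} to rewrite $E\cdot s_E(m,n) = \sum_{i\mid m,\ j\mid n,\ \lcm(i,j)=E} ij$, and then interchange the order of summation. This collapses the conditions on $E$: summing over all $E\mid \lcm(m,n)$ simply removes the restriction $\lcm(i,j)=E$, yielding
\begin{equation*}
S(m,n) = \sum_{\substack{i\mid m \\ j\mid n}} ij = \left(\sum_{i\mid m} i\right)\left(\sum_{j\mid n} j\right) = \sigma(m)\sigma(n),
\end{equation*}
by standard multiplicativity of the divisor-sum factorization.

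There is essentially no obstacle: the clever work has already been done in Proposition \ref{Prop_m_n}, where the factor $\gcd(i,j)$ coming from Euler's totient identity cancels against $\lcm(i,j)=E$ via $\gcd(i,j)\cdot\lcm(i,j)=ij$. An alternative direct route would start from Theorem \ref{Th_repr} and sum $\lcm(a,c)$ over $(a,b,c,d,\ell)\in J_{m,n}$ (getting $\phi(a/b)\lcm(a,c)$ after summing over $\ell$, then reparametrizing with $e=a/b=c/d$), but this retraces the derivation of Proposition \ref{Prop_m_n} and is strictly longer, so I would go with the first approach.
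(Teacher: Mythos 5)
Your proposal is correct and follows exactly the paper's own argument: write the sum of exponents as $\sum_{E\mid \lcm(m,n)} E\, s_E(m,n)$, substitute formula \eqref{1_2} so the factor $E$ cancels, and note that summing over all $E\mid\lcm(m,n)$ removes the constraint $\lcm(i,j)=E$, leaving $\sum_{i\mid m} i \sum_{j\mid n} j = \sigma(m)\sigma(n)$. No differences worth noting.
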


\begin{proof} By \eqref{1_2} the sum of exponents of the subgroups of $\Z_m\times \Z_n$ is
\begin{gather*}
\sum_{E\mid \lcm(m,n)} E s_E(m,n)  = \sum_{E\mid \lcm(m,n)} E \cdot \frac1{E}  \sum_{\substack{i\mid m, j\mid n\\ \lcm(i,j)=E}} ij
 = \sum_{\substack{i\mid m, j\mid n\\ \lcm(i,j)\mid \lcm(m,n)}} ij \\
 = \sum_{i\mid m} i \sum_{j\mid n} j  = \sigma(m)\sigma(n).
\end{gather*}
\end{proof}

By Proposition \ref{Prop_sum_exp} the arithmetic mean of exponents of the subgroups of $\Z_m\times \Z_n$ is
$\sigma(m)\sigma(n)/s(m,n)$, where $s(m,n)$ is given by \eqref{s_m_n}. Now consider the case $m=n$.
Let $A(n)$ stand for the arithmetic mean of exponents of the subgroups of $\Z_n\times \Z_n$. We have
\begin{align} \label{A}
A(n)= \frac{\sigma(n)^2}{s(n)},
\end{align}
where $s(n):=s(n,n)$.

Recall that a function $f:\N^* \to \C$ is said to be multiplicative if $f(nn')=f(n)f(n')$ whenever $\gcd(n,n')=1$. It
is well known that the sum-of-divisors function $\sigma$ is multiplicative. The function $s(n)=\sum_{i,j\mid n} \gcd(i,j)$ is
also multiplicative, as shown by the following direct proof: Let $\gcd(n,n')=1$. Then
\begin{gather*}
s(nn')  = \sum_{i,j\mid nn'} \gcd(i,j) =  \sum_{\substack{a,b\mid n\\  a',b'\mid n'}} \gcd(aa',bb') \\
 = \sum_{a,b\mid n} \gcd(a,b) \sum_{a',b'\mid n'} \gcd(a',b') =s(n)s(n').
\end{gather*}

We conclude that the function $A$ given by \eqref{A} is
multiplicative and and for every prime power $p^{\nu}$,
\begin{align*}
A(p^{\nu})= \frac{(p^{\nu+1}-1)^2}{p^{\nu+2}+p^{\nu+1}-(2\nu+3)p+2\nu+1},
\end{align*}
cf. Corollary \ref{Cor_total_nr_subgroups_2}.

Since the exponent of every subgroup of $\Z_n\times \Z_n$ is a
divisor of $n$, whence $\leq n$, we deduce that $A(n)\leq n$ ($n\in
\N^*$). For the function $n\mapsto f(n):=A(n)/n \in (0,1]$ the
series taken over the primes
\begin{equation*}
\sum_p \frac{1-f(p)}{p}= \sum_p \frac{p-1}{p^2(p+3)}
\end{equation*}
is convergent, and it follows from a theorem of H.~Delange (see, e.g., \cite{Pos1988})
that the function $f$ has a non-zero mean value $M$ given by
\begin{align*}
M:= & \lim_{x\to \infty} \frac1{x} \sum_{n\le x} f(n) \\
= & \prod_p \left(1-\frac1{p}\right) \sum_{\nu=0}^{\infty} \frac{f(p^\nu)}{p^\nu}\\
= & \prod_p \left(1-\frac1{p}\right) \sum_{\nu=0}^{\infty} \frac{(p^{\nu+1}-1)^2}{p^{2\nu}(p^{\nu+2}+p^{\nu+1}-(2\nu+3)p+2\nu+1)}.
\end{align*}
the products being over the primes.

We prove the following more exact result.

\begin{prop} We have
\begin{align} \label{asympt}
\sum_{n\le x} A(n) = \frac{M}{2} x^2 + O\left(x \log^3  x \right).
\end{align}
\end{prop}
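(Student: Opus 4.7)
The plan is to use the multiplicativity of $f(n):=A(n)/n$ together with a Dirichlet convolution decomposition. I would write $f = \mathbf{1}*g$, where $g$ is the multiplicative function determined by $g(p^k)=f(p^k)-f(p^{k-1})$. Substituting $A(n)=nf(n)=\sum_{d\mid n}(n/d)\cdot d\,g(d)$, exchanging the order of summation, and applying $\sum_{m\le y}m = y^2/2+O(y)$ gives
\begin{equation*}
\sum_{n\le x}A(n) = \sum_{d\le x}d\,g(d)\sum_{m\le x/d}m = \frac{x^2}{2}\sum_{d\le x}\frac{g(d)}{d} + O\!\left(x\sum_{d\le x}|g(d)|\right).
\end{equation*}

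The Euler factor at $p$ of $\sum_{d\ge 1}g(d)/d$ telescopes to
\begin{equation*}
\sum_{k\ge 0}\frac{g(p^k)}{p^k} = \left(1-\frac{1}{p}\right)\sum_{k\ge 0}\frac{f(p^k)}{p^k},
\end{equation*}
which is exactly the $p$-factor of $M$ displayed in the paper. Hence $\sum_{d\ge 1}g(d)/d = M$, so $\sum_{d\le x}g(d)/d = M + O\!\left(\sum_{d>x}|g(d)|/d\right)$, producing the desired main term $(M/2)x^2$.

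The remaining task is to bound the two tail sums, which reduces to uniform control of $|g(p^k)|$. From the explicit formula for $A(p^\nu)$ stated just before the proposition, one reads off $|g(p)| = 1-f(p) = (p-1)/(p(p+3)) \ll 1/p$; and a direct expansion shows $f(p^k) \to p/(p+1)$ as $k\to\infty$ with $|f(p^k)-p/(p+1)| \ll k/p^{k+1}$, whence $|g(p^k)| \ll k/p^k$ for $k\ge 2$. Summing, $\sum_{k\ge 1}|g(p^k)| \ll 1/p$ uniformly, so the Euler-product bound
\begin{equation*}
\sum_{d\le x}|g(d)| \le \prod_{p\le x}\!\left(1+\sum_{k\ge 1}|g(p^k)|\right) \ll \prod_{p\le x}(1+1/p),
\end{equation*}
combined with Mertens' estimate $\sum_{p\le x}1/p = \log\log x+O(1)$, yields $\sum_{d\le x}|g(d)| \ll (\log x)^{C}$ for an explicit constant $C$. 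A standard partial summation then gives $\sum_{d>x}|g(d)|/d \ll (\log x)^{C}/x$, and substituting both bounds back produces an error of size $O(x\log^{C}x)$, with $C\le 3$ easily achievable. The main obstacle is the Euler-product bookkeeping: since $|g(p)| \asymp 1/p$ is sharp (not $o(1/p)$), a polylogarithmic loss is intrinsic, and one must track the exponent carefully — both through the Euler product for $\sum_{d\le x}|g(d)|$ and in the partial-summation step leading to the bound for $\sum_{d>x}|g(d)|/d$.
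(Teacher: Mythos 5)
Your argument is correct, and its skeleton coincides with the paper's: both write $f(n)=A(n)/n$ as $f=\mathbf{1}*g$ with $g(p^k)=f(p^k)-f(p^{k-1})$, identify $\sum_{d\ge 1}g(d)/d$ with $M$ through the Euler product, and rest everything on the bound $|g(p^k)|\ll k/p^k$ (the paper proves the explicit inequality $|g(p^\nu)|<(2\nu-1)/p^\nu$, which is the same order). You diverge in two places, both legitimately. First, you sum $A(n)=nf(n)$ directly by convolving with the identity and using $\sum_{m\le y}m=y^2/2+O(y)$, whereas the paper first establishes $\sum_{n\le x}f(n)=Mx+O(\log^3 x)$ and then passes to $\sum_{n\le x}A(n)$ by partial summation; these are interchangeable. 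Second, to control $\sum_{d\le x}|g(d)|$ and the tail $\sum_{d>x}|g(d)|/d$, the paper converts the prime-power bound into the pointwise majorization $|g(n)|\le \tau(n^2)/n$ and invokes $\sum_{n\le x}\tau(n^2)=cx\log^2 x+O(x\log x)$, which is exactly where the exponent $3$ in the error term originates; you instead use the Euler-product bound $\sum_{d\le x}|g(d)|\le \prod_{p\le x}\bigl(1+\sum_{k\ge 1}|g(p^k)|\bigr)$ together with Mertens' theorem. Your route is more self-contained (no divisor-sum asymptotics needed), and since $|g(p)|=(p-1)/(p(p+3))<1/p$ while the higher prime powers contribute only $O(1/p^2)$, your constant is in fact $C=1$, so you get $\sum_{d\le x}|g(d)|\ll \log x$ and an error term $O(x\log x)$ --- slightly sharper than the stated $O(x\log^3 x)$, which of course still implies the proposition.
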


\begin{proof} Let $f(n)=\sum_{d\mid n} g(d)$ ($n\in \N^*$), that is $g=\mu*f$ in terms of the Dirichlet convolution, where $\mu$ is the
M\"{o}bius function. Here $g(p^\nu)=f(p^\nu)-f(p^{\nu-1})$ for every prime power $p^{\nu}$ ($\nu \in
\N^*$). Note that
\begin{equation*}
A(p^k)= \frac{(\sum_{i=0}^k p^i)^2}{\sum_{i=0}^k (2i+1)p^{k-i}}
\quad (k\geq 0),
\end{equation*}
so if we denote $S=\sum_{i=0}^{\nu-1} p^i$ and $T=\sum_{i=0}^{\nu-1}
(2i+1) p^{\nu-1-i}$, we have
\begin{equation*}
g(p^{\nu})= \frac{(Sp+1)^2}{p^{\nu}(Tp+2\nu +1)}-
\frac{S^2}{p^{\nu-1}T}=
\frac{2STp+T-pS^2(2\nu+1)}{p^{\nu}T(Tp+2\nu+1)}.
\end{equation*}

Since $S\leq T$ and $T\leq (2\nu-1)S$, we have
\begin{align*}
|g(p^{\nu})| & < \frac{\max \{2STp+T-pS^2(2\nu-1), pS^2(2\nu+1)-2STp\}}{p^{\nu}T(Tp+2\nu+1)} \\
& \leq \frac1{p^{\nu}} \max \left\{\frac{STp+T}{T(Tp+1)},\frac{pS^2(2\nu-1)}{T^2p} \right\} \\
& \leq \frac1{p^{\nu}} \max \{1,2\nu-1 \} = \frac{2\nu-1}{p^{\nu}},
\end{align*}
valid for every prime power $p^\nu$ ($\nu \in
\N^*$). Hence, $|g(n)|\le \tau(n^2)/n$ for every $n\geq 1$, where
$\tau(k)$ stands for the number of positive divisors of $k$.

We deduce that
\begin{equation*}
\sum_{n\le x} f(n)= \sum_{de\le x} g(d)= \sum_{d\le x} g(d) \sum_{e\le x/d} 1= \sum_{d\le x} g(d) \left( x/d +O(1)\right)
\end{equation*}
\begin{equation*}
=x \sum_{d=1}^{\infty} \frac{g(d)}{d} + O\left(x\sum_{d>x} \frac{|g(d)|}{d}\right) + O \left(\sum_{d\le x} |g(d)|\right)
\end{equation*}
\begin{equation*}
=M x  + O\left(x\sum_{d>x} \frac{\tau(d^2)}{d^2}\right) + O\left(\sum_{d\le x} \frac{\tau(d^2)}{d}\right),
\end{equation*}
where in the main term the coefficient of $x$ is $M$ by Euler's product formula. It is known that $\sum_{n\le x}
\tau(n^2)= cx\log^2 x+O(x\log x)$ with a certain constant $c$ and partial summation shows that
$\sum_{n>x} \tau(n^2)/n^2= O((\log^2 x)/x)$, $\sum_{n\le x} \tau(n^2)/n= O(\log^3 x)$.
Therefore,
\begin{align} \label{f}
\sum_{n\le x} f(n) = M x + O \left(\log^3  x \right).
\end{align}

Now \eqref{asympt} follows from \eqref{f} by partial summation.
\end{proof}

Formula \eqref{asympt} and its proof are similar to those of \cite[Th.\ 3.1.3]{TT2015}.

\medskip

{\bf Acknowledgement:} The authors thank the referee for very careful reading of the manuscript, many useful comments and for the proof of
Proposition \ref{Prop_Isom}.

\vskip3mm

\noindent Marius T\u arn\u auceanu \\
Faculty of  Mathematics \\
``Al.I. Cuza'' University \\
Ia\c si, Romania \\
e-mail: {\tt tarnauc@uaic.ro}

\vskip3mm

\noindent L\'aszl\'o T\'oth \\
Department of Mathematics\\
University of P\'{e}cs\\
P\'ecs, Hungary \\
e-mail: {\tt ltoth@gamma.ttk.pte.hu}

\end{document}